\newcommand{\beq}{\begin{equation}}
\newcommand{\eeq}{\end{equation}}
\newcommand{\beqs}{\begin{equation*}}
\newcommand{\eeqs}{\end{equation*}}
\newcommand{\bA}{{\bf A}}
\newcommand{\bC}{{\bf C}}
\newcommand{\bG}{{\bf G}}
\newcommand{\bP}{{\bf P}}
\newcommand{\bbT}{{\bf T}}
\newcommand{\bM}{{\bf M}}
\newcommand{\bK}{{\bf K}}
\newcommand{\bI}{{\bf I}}
\newcommand{\ba}{{\bf a}}
\newcommand{\bb}{{\bf b}}
\newcommand{\bx}{{\bf x}}
\newcommand{\by}{{\bf y}}
\newcommand{\bd}{{\bf d}}
\newcommand{\bff}{{\bf f}}
\newcommand{\bg}{{\bf g}}
\newcommand{\bu}{{\bf u}}
\newcommand{\be}{{\bf e}}
\newcommand{\bz}{{\bf z}}
\newcommand{\bp}{{\bf p}}
\newcommand{\br}{{\bf r}}
\newcommand{\mA}{\mathcal{A}}
\newcommand{\mN}{\mathcal{N}}
\newcommand{\mT}{\mathcal{T}}
\newcommand{\bw}{{\bf w}}
\newcommand{\Div}{\nabla\cdot}
\newcommand{\BC}{\begin{center}}
\newcommand{\EC}{\end{center}}
\newcommand{\bdm}{\begin{displaymath}}
\newcommand{\edm}{\end{displaymath}}
\newcommand{\CF}{{\cal F}}
\newcommand{\dia}[1]{\text{diag}(#1)}
\newcommand{\argmin}{\mathop{\mbox{argmin}}}
\newcommand{\barr}{\begin{array}}
\newcommand{\earr}{\end{array}}
\newcommand{\beqas}{\begin{eqnarray*}}
\newcommand{\eeqas}{\end{eqnarray*}}
\title{A Nonlinear GMRES Optimization Algorithm for Canonical Tensor Decomposition}
\author{
H. De Sterck\footnotemark[1] \footnotemark[4] 
}
\begin{document}
%
\maketitle
\renewcommand{\thefootnote}{\fnsymbol{footnote}}
\footnotetext[1]{Department of Applied Mathematics, University of Waterloo,
Waterloo, Ontario, Canada}
\footnotetext[4]{hdesterck@uwaterloo.ca}
\begin{abstract}
A new algorithm is presented for computing a canonical rank-$R$ tensor approximation that has minimal distance to a given tensor in the Frobenius norm, where the canonical rank-$R$ tensor consists of the sum of $R$ rank-one components.
Each iteration of the method consists of three steps. In the first step, a tentative new iterate is generated by a stand-alone one-step process, for which we use alternating least squares (ALS). In the second step, an accelerated iterate is generated by a nonlinear generalized minimal residual (GMRES) approach, recombining previous iterates in an optimal way, and essentially using the stand-alone one-step process as a preconditioner. In particular, the nonlinear extension of GMRES is used that was proposed by Washio and Oosterlee in {\bf [ETNA Vol.~15 (2003), pp.~165-185]} for nonlinear partial differential equation problems. In the third step, a line search is performed for globalization. The resulting nonlinear GMRES (N-GMRES) optimization algorithm is applied to dense and sparse tensor decomposition test problems. The numerical tests show that ALS accelerated by N-GMRES may significantly outperform both stand-alone ALS and a standard nonlinear conjugate gradient optimization method, especially when highly accurate stationary points are desired for difficult problems. 
The proposed N-GMRES optimization algorithm is based on general concepts and may be applied to other nonlinear optimization problems.
\end{abstract}
\begin{keywords} canonical tensor decomposition, alternating least squares, GMRES, nonlinear optimization
\end{keywords}
\begin{AMS} 15A69 Multilinear algebra, 65F10 Iterative methods, 65K10 Optimization, 65F08 Preconditioners for iterative methods
\end{AMS}
\pagestyle{myheadings}
\thispagestyle{plain}
\markboth{H. De Sterck
}{Nonlinear GMRES Optimization for Canonical Tensor Decomposition}

\section{Introduction}
In this paper, we present a new algorithm for computing a canonical rank-$R$ tensor approximation that has minimal distance to a given tensor in the Frobenius norm, where the canonical rank-$R$ tensor consists of the sum of $R$ rank-one components. 
As one of its components, the optimization 
algorithm uses a nonlinear version of the generalized minimal residual (GMRES) method that was originally proposed for iteratively solving systems of linear equations \cite{SaadGMRES}. More specifically, we use the nonlinear extension of GMRES that was developed by Washio and Oosterlee \cite{WashioNGMRES-ETNA} for nonlinear partial differential equation (PDE) systems, and apply it to the tensor optimization problem, with the purpose of efficiently driving the gradient of the objective function to zero in a process that uses iterate recombination. We apply this nonlinear GMRES acceleration to the alternating least squares (ALS) method, and combine it with a line search for globalization. 
We perform numerical tests to investigate the performance of the resulting nonlinear GMRES (N-GMRES) optimization algorithm for canonical tensor approximation.

$N$-way tensor $\mT \in \mathbb{R}^{I_1 \times \ldots \times I_N}$ is an $N$-dimensional array of size $I_1 \times \ldots \times I_N$ \cite{KoldaSIREV}.
The size of mode $n$ is $I_n$ ($n=1, \ldots, N$). Let $\mA_R \in \mathbb{R}^{I_1 \times \ldots \times I_N}$ be a canonical rank-$R$ tensor, given by
\begin{align}
\mA_R=\sum_{r=1}^{R} \, \ba_r^{(1)} \circ \ldots \circ \ba_r^{(N)}=[\![\bA^{(1)}, \ldots, \bA^{(N)} ]\!].
\label{eq:A}
\end{align}
Canonical tensor $\mA_R$ is a sum of $R$ rank-one tensors, with the $r$th rank-one tensor composed of the outer product of $N$ column vectors $\ba_r^{(n)} \in \mathbb{R}^{I_n}$, $n=1, \ldots, N$. For each mode $n=1,\ldots,N$, the $R$ vectors $\ba_r^{(n)}$, $r=1,\ldots,R$, form the columns of the mode-$n$ factor matrix $\bA^{(n)}$, and the double-bracket notation on the right of (\ref{eq:A}) is used to denote the canonical tensor by the factor matrices.

This paper concerns numerical algorithms for the following optimization problem:\\

\noindent
{\sc optimization problem I:}

\begin{center}
given tensor $\mT \in \mathbb{R}^{I_1 \times \ldots \times I_N}$, find rank-$R$\\
canonical tensor $\mA_R \in \mathbb{R}^{I_1 \times \ldots \times I_N}$ that minimizes\\
\end{center}
\begin{align}
f(\mA_R)=\frac{1}{2} \, \| \mT  - \mA_R\|_F^2.
\label{eq:fA}
\end{align}
Here, $\| . \|_F$ denotes the Frobenius norm of the $N$-dimensional array, which is the square root of the sum of the squares of all the array elements.

We now briefly recall some properties of canonical tensor decomposition, mainly referring to review article \cite{KoldaSIREV}, which also contains extensive references to original papers that the reader may consult for more detail.
The exact decomposition of a data tensor $\mT$ into a canonical tensor is often called a CP decomposition of the tensor, with the C standing for `CANDECOMP' and the `P' standing for `PARAFAC', after the names originally given to this decomposition in early papers on the subject \cite{CANDECOMP,PARAFAC}. The smallest number of rank-one components that generate $\mT$ as their sum is called the tensor rank of tensor $\mT$ \cite{KoldaSIREV}.
If, rather than an exact decomposition, $\mT$ is {\em approximately} decomposed into a low-rank canonical tensor $\mA_R$ of specified rank $R$ that is smaller than the rank of $\mT$, then the resulting tensor $\mA_R$ is called an approximate CP decomposition. CP decompositions are used for data analysis in many applications such as chemometrics, signal processing, neuroscience and web analysis. Many criteria exist for specifying the type of approximation that is sought for approximate CP decompositions (see, e.g., \cite{TomasiPARAFAC}). In this paper, we focus on the specific (and practically relevant) case of computing an approximate CP decomposition $\mA_R$ that minimizes the Frobenius distance between the data tensor and $\mA_R$, i.e., we seek to minimize objective function (\ref{eq:fA}). Contrary to the case of best rank-$R$ matrix approximation, the rank-one terms of the best rank-$R$ CP tensor approximation cannot be solved for sequentially but must be found simultaneously, since a best rank-$R$ CP approximation cannot be obtained by truncating a best rank-$S$ approximation with $S>R$ \cite{KoldaSIREV}.

It is well-known that Optimization Problem I is a non-convex optimization problem, and as such may exhibit multiple local minima. In the form given above, its local minima are not isolated, since there is a scaling indeterminacy in each of the rank-one terms: there is ample freedom to rescale the vectors $\ba_r^{(n)}$ in (\ref{eq:A}) without changing the rank-one product. In our approach, we deal with this by normalizing the $\ba_r^{(n)}$ in a specific way, to be explained below. There is also a permutation indeterminacy in the order of the rank-one terms, which we deal with by imposing a specific order, also to be explained below. Even when the scaling and permutation indeterminacies are removed, CP optimization may still exhibit multiple local minima for some problems, and depending on the initial guess, iterative methods for approximate CP decomposition may converge to different stationary points. It is also possible that the best rank-$R$ approximation does not exist, which may happen when some rank-one terms with opposite signs in the canonical tensor become unbounded in size as the canonical tensor approaches the target tensor, in a phenomenon called degeneracy \cite{KoldaSIREV}. On the other hand, exact CP decomposition has been shown to be unique up to scaling and permutation under mild conditions that relate the ranks of the factor matrices with the tensor rank \cite{KoldaSIREV}.

A multitude of algorithms and approaches exist for computing approximate CP decompositions, see, for example,
\cite{TomasiPARAFAC,KoldaSIREV,AcarCPOPT} and references therein. A standard numerical method for computing the CP approximation of Optimization Problem I is the ALS method, which was already proposed in early papers on CP decomposition \cite{CANDECOMP,PARAFAC}. ALS is simple to understand and implement, and often performs adequately, but its convergence can also be very slow, depending on the problem and the initial condition. Alternatives to ALS are described in, for example, \cite{TomasiPARAFAC,LathauwerSimul,LathauwerSchur}, see also the discussion in \cite{KoldaSIREV,AcarCPOPT}. Even though ALS is a simple algorithm, it has proven difficult over the years to develop
alternatives that significantly improve on it in a way that is robust over large classes of problems. As a results, ALS-type algorithms are still often considered as the `workhorse' algorithms of choice in practice.

In recent work on CP decomposition, Acar et al. \cite{AcarCPOPT} consider first-order optimization algorithms for Optimization Problem I. In particular, they employ the nonlinear conjugate gradient (N-CG) method and compare it with ALS and nonlinear least-squares algorithms. In order to formulate first-order optimization methods, they derive expressions for the gradient of objective function (\ref{eq:fA}) in a systematic way. At any (local) minimum of Optimization Problem I the following conditions hold:\\

\noindent
{\sc first-order optimality equations I:}
\begin{align}
\nabla f(\mA_R)=\bg(\mA_R)=0.
\label{eq:gA}
\end{align}
Detailed expressions for the gradient vector $\bg(\mA_R)$ will be given below.
Acar et al. \cite{AcarCPOPT} then apply a standard N-CG optimization method with line search and obtain convergence speeds that are competitive with ALS and sometimes better, depending on the problem.

In this paper we also follow a first-order optimization approach for Optimization Problem I, but we propose to use a different optimization method. In particular, we propose to use a nonlinear GMRES approach to recombine iterates provided by a standard iterative method for Optimization Problem I (we use ALS as the `GMRES preconditioner' in this paper), combined with line search for globalization. The resulting N-GMRES optimization algorithm will be shown numerically to accelerate convergence significantly in many cases, especially when stationary points need to be determined accurately for difficult problems, and we show this for two classes of dense and sparse test problems.
The N-GMRES optimization algorithm proposed is easy to implement as a wrapper around any iterative solution method for Optimization Problem I. It combines previous iterates and can thus, in that sense, also be considered as a generalization of line search methods for ALS \cite{TomasiPARAFAC,ComonLS}, but taking more previous iterates into account (we typically use up to 20). However, our approach is also more general in that it can potentially be used to accelerate other CP algorithms than ALS. In fact, the N-GMRES optimization algorithm proposed is based on general concepts and may be applied to other nonlinear optimization problems, as a potential alternative to other first-order optimization methods like N-CG.
\begin{figure}[!htbp]
  \centering
  \scalebox{0.35}{
  \includegraphics{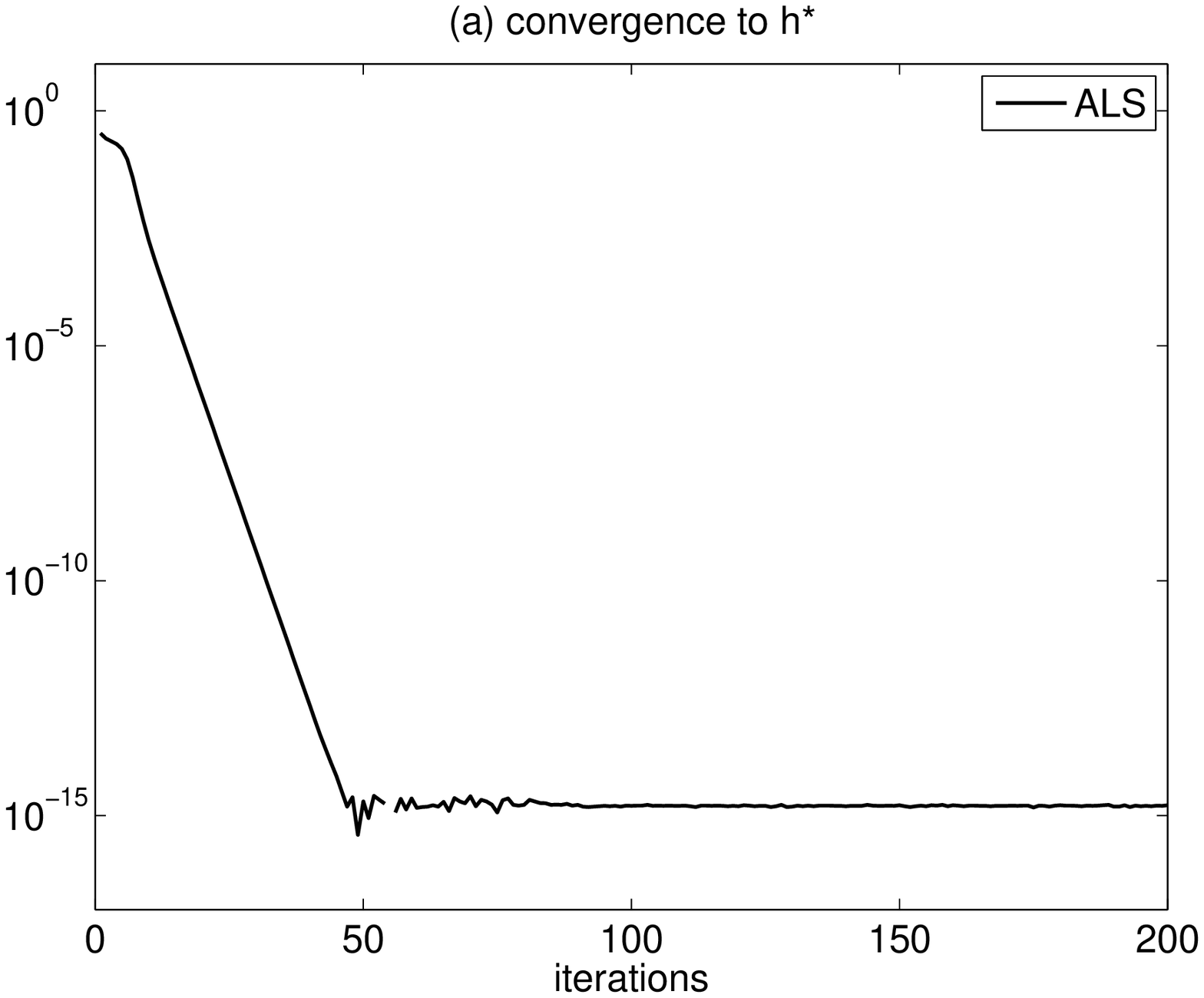}
  \includegraphics{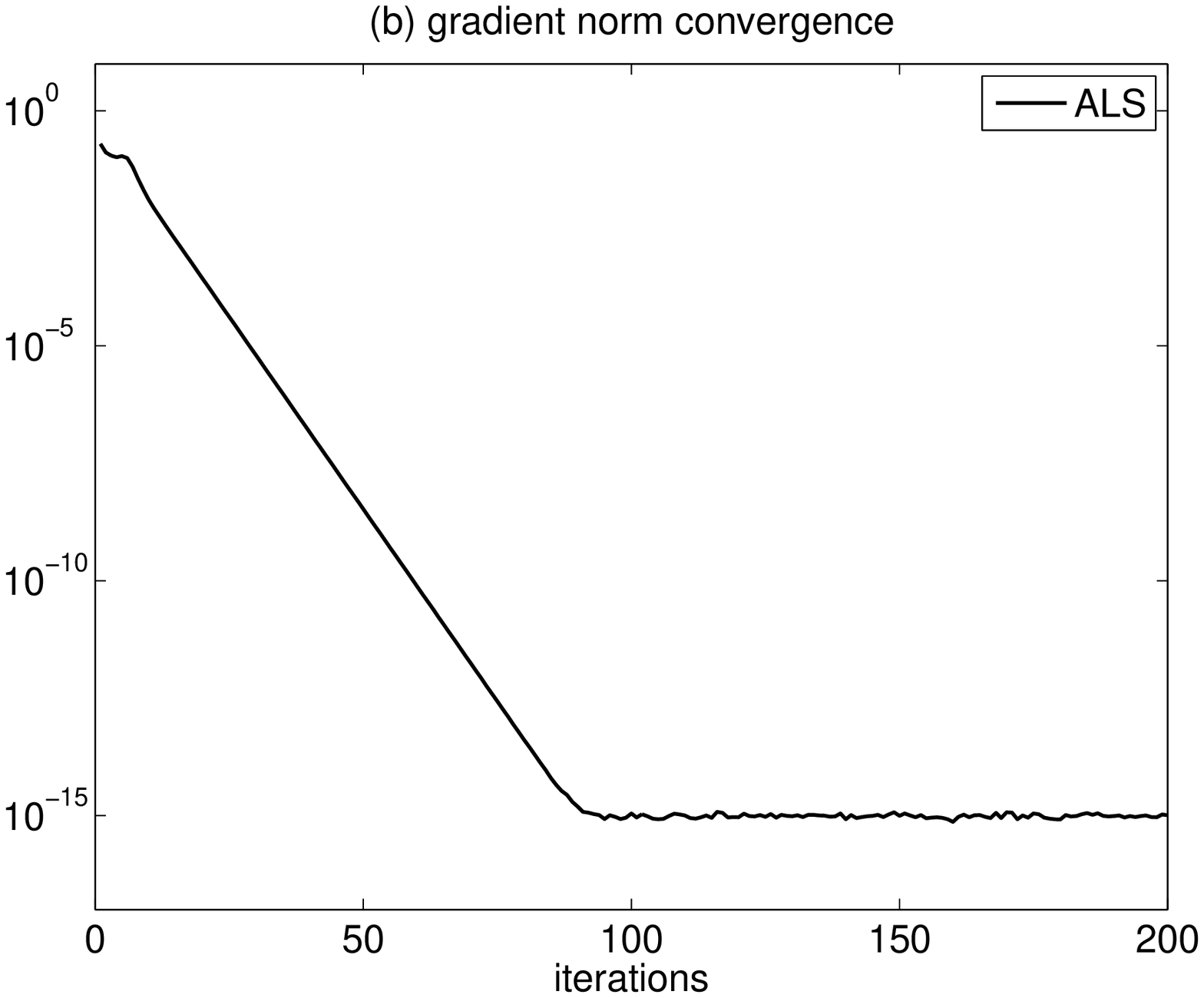}
  }
   \caption{ALS convergence plots for an `easy' instance of Test Problem I (parameters $s=$50, $c=$0.5, $R=$3, $l_{1}=$1, $l_{2}=$1, see Section \ref{sec:NumDense} for full problem description). (a) Convergence of $|h(\mA_R^{(i)})-h^*|$, where $h^*$ is the value of the normalized distance measure, (\ref{eq:hA}), in the stationary point the method converges to. (b) Convergence of the normalized gradient of the objective function, $\|\bg(\mA_R^{(i)})\|_2$/$\|\mT\|_F$, indicating convergence to a stationary point. ALS is a fast method for this `easy' problem, and N-GMRES acceleration is not required.}
   \label{fig:introEasy}
\end{figure}    

\begin{figure}[!htbp]
  \centering
  \scalebox{0.35}{
  \includegraphics{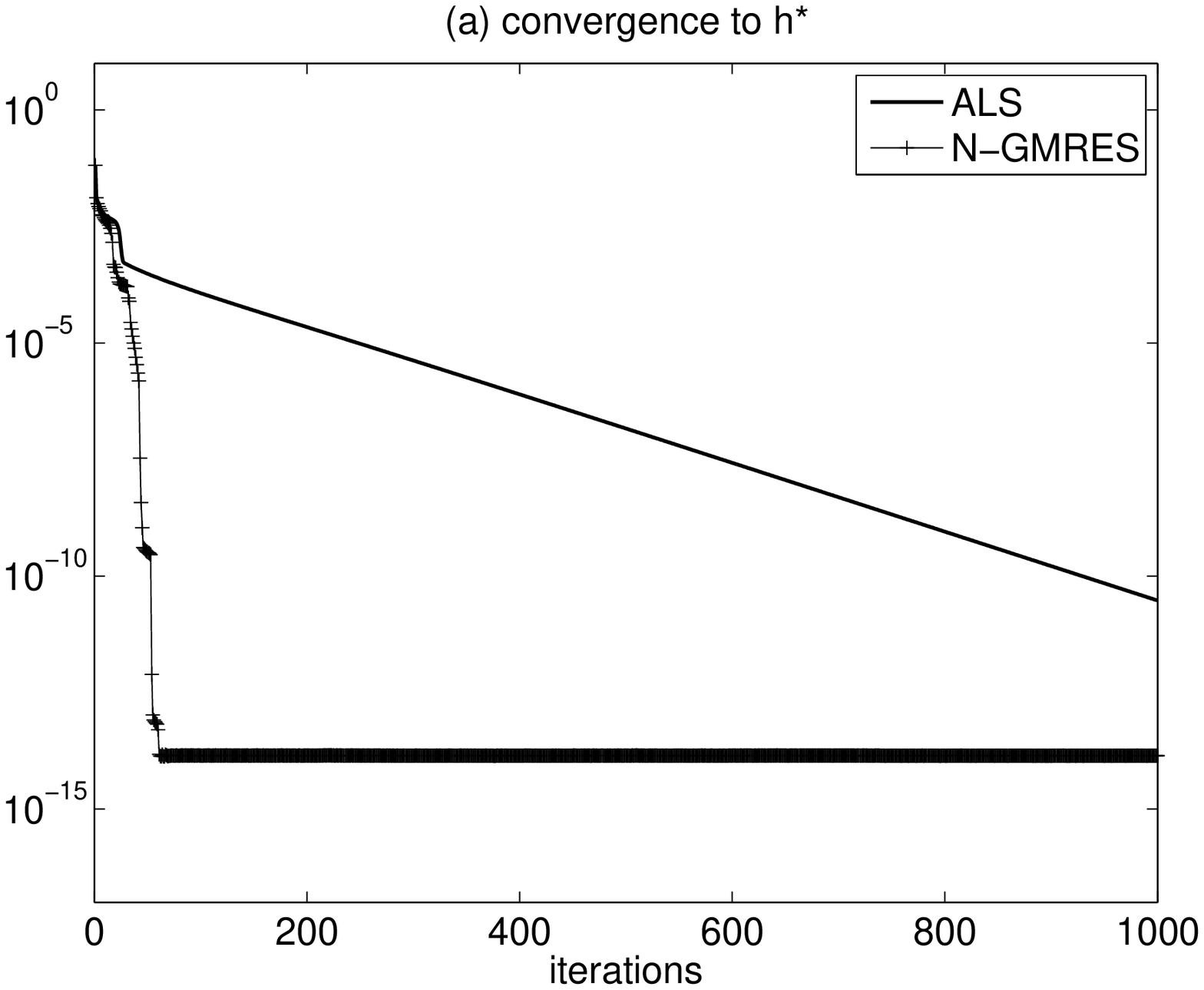}
  \includegraphics{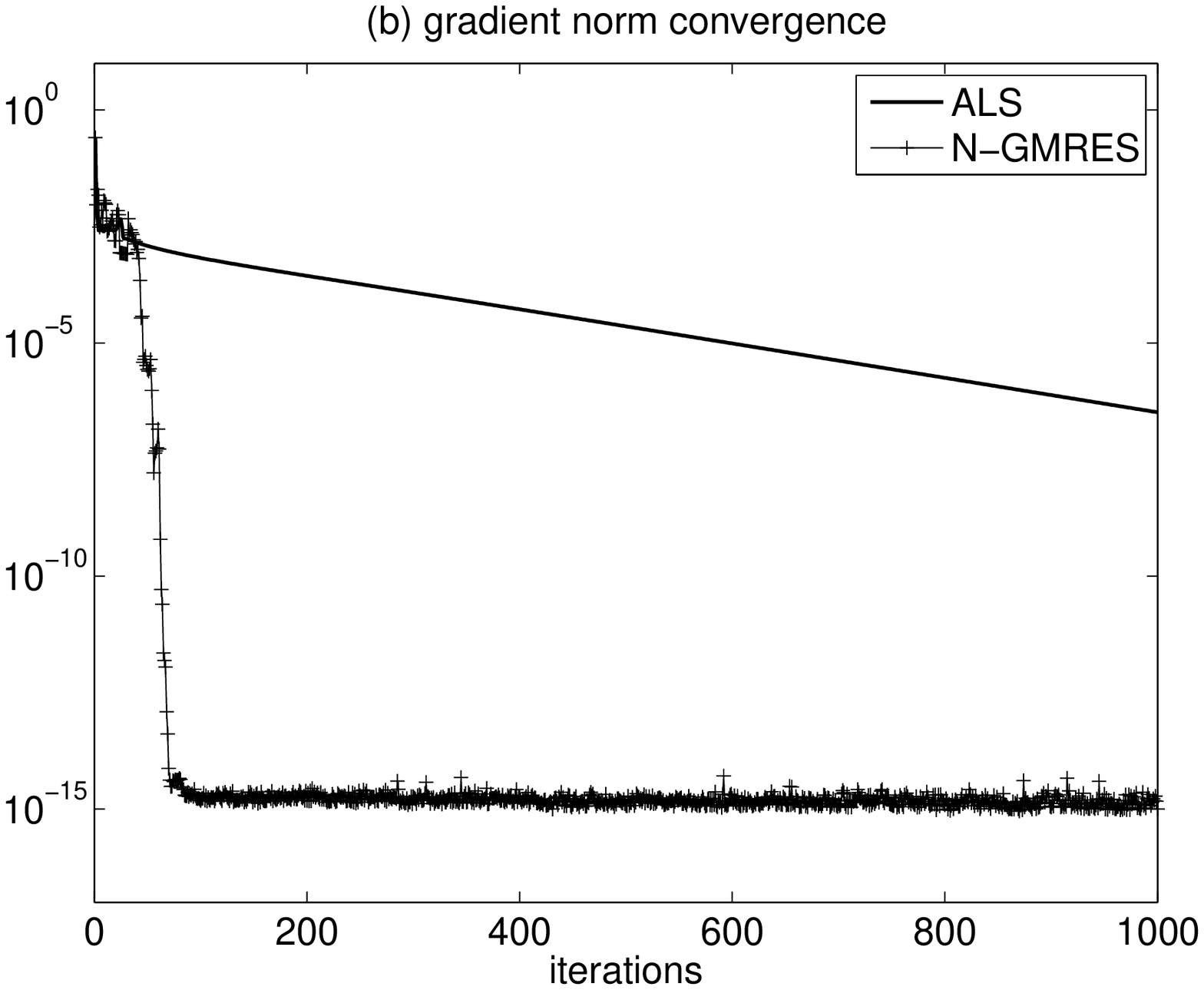}
  }
   \caption{ALS convergence plots for a `difficult' instance of Test Problem I (parameters $s=$50, $c=$0.9, $R=$3, $l_{1}=$1, $l_{2}=$1). (a) Convergence of $|h(\mA_R^{(i)})-h^*|$. (b) Convergence of $\|\bg(\mA_R^{(i)})\|_2$/$\|\mT\|_F$. ALS is slow for this `difficult' problem, but our proposed N-GMRES optimization algorithm significantly reduces the number of iterations.}
   \label{fig:introHard}
\end{figure}    

The nonlinear GMRES acceleration method we propose to use in our algorithm for nonlinear optimization is the nonlinear extension of GMRES that was developed by Washio and Oosterlee in \cite{WashioNGMRES-ETNA} to accelerate multigrid solvers for nonlinear PDE systems (see also \cite{OosterleeNGMRES-SISC,OosterleeNGMRES-ETNA} for further applications of their method in the nonlinear PDE systems context). Their method is a generalization of Saad and Schultz's celebrated GMRES method for linear equation systems \cite{SaadGMRES} to the nonlinear case. In the N-GMRES optimization algorithm we propose, we apply this nonlinear GMRES approach to combine ALS-generated iterates with the goal of driving the residual of the (nonlinear) first-order optimality equations to zero. A line search method is used to provide globalization. Just like for GMRES for linear equation systems, the method that generates the iterates can be seen as a preconditioner for GMRES, or, from an alternative viewpoint, GMRES can be interpreted as an acceleration mechanism for the method that generates the iterates \cite{WashioNGMRES-ETNA,OosterleeNGMRES-SISC}. It will be explained in detail below how this interpretation carries over to the present context of nonlinear optimization for CP decomposition.

Figs.~\ref{fig:introEasy}-\ref{fig:introHardTiming} give a preview of what the N-GMRES optimization algorithm proposed in this paper tries to achieve. The figures present convergence plots for a dense test problem that is a standard test case for CP decomposition from the literature \cite{TomasiPARAFAC,AcarCPOPT}. A 3-way data tensor with size $50\times50\times50$ is generated starting from a rank-3 canonical tensor with random factor matrices modified to have pre-specified collinearity $c$, and noise is added. (See Test Problem I in Section \ref{sec:NumDense} for a detailed description of the test case.). A rank-3 CP approximation is sought starting from a random initial guess. 
In order to track accuracy during the progress of the iterative methods, we define a measure of the normalized distance between data tensor $\mT$ and rank-$R$ approximation $\mA_R^{(i)}$ in iteration $i$ as
\begin{align}
h(\mA_R^{(i)})=\frac{\| \mT  - \mA_R^{(i)}\|_F}{\|\mT\|_F},
\label{eq:hA}
\end{align}
and define the optimal distance
\begin{align}
h^*= h(\mA_R^*),
\label{eq:h*}
\end{align}
where $\mA_R^*$ is the stationary point that the method converges to in the test.
The accuracy of the approximation as the iterative method progresses is then tracked by measuring $|h(\mA_R^{(i)})-h^*|$.
We also track convergence of $\|\bg(\mA_R^{(i)})\|_2$/$\|\mT\|_F$, the norm of the gradient of the objective function normalized by the norm of the data tensor, which gives information about convergence to a stationary point.

Fig.~\ref{fig:introEasy} shows convergence plots for a case with collinearity $c=0.5$. It is known that this case is `easy' for ALS, and the plots confirm that ALS converges quickly to a stationary point. It is also known that factor matrices with nearly collinear columns constitute problems that are more difficult for ALS \cite{TomasiPARAFAC,AcarCPOPT}. Fig.~\ref{fig:introHard} shows convergence plots for a case with $c=0.9$, and we can indeed observe that ALS converges slowly. The plots also show how the N-GMRES optimization algorithm that is proposed in this paper significantly speeds up ALS and reduces the number of iterations dramatically. Of course, Fig.~\ref{fig:introHard} is only part of the story, because the N-GMRES iterations are more expensive than simple ALS iterations. However, the timing plots in Fig.~\ref{fig:introHardTiming} show that significant gains can still be made if this extra cost is taken into account, especially when accurate results are desired.

\begin{figure}[!htbp]
  \centering
  \scalebox{0.35}{
  \includegraphics{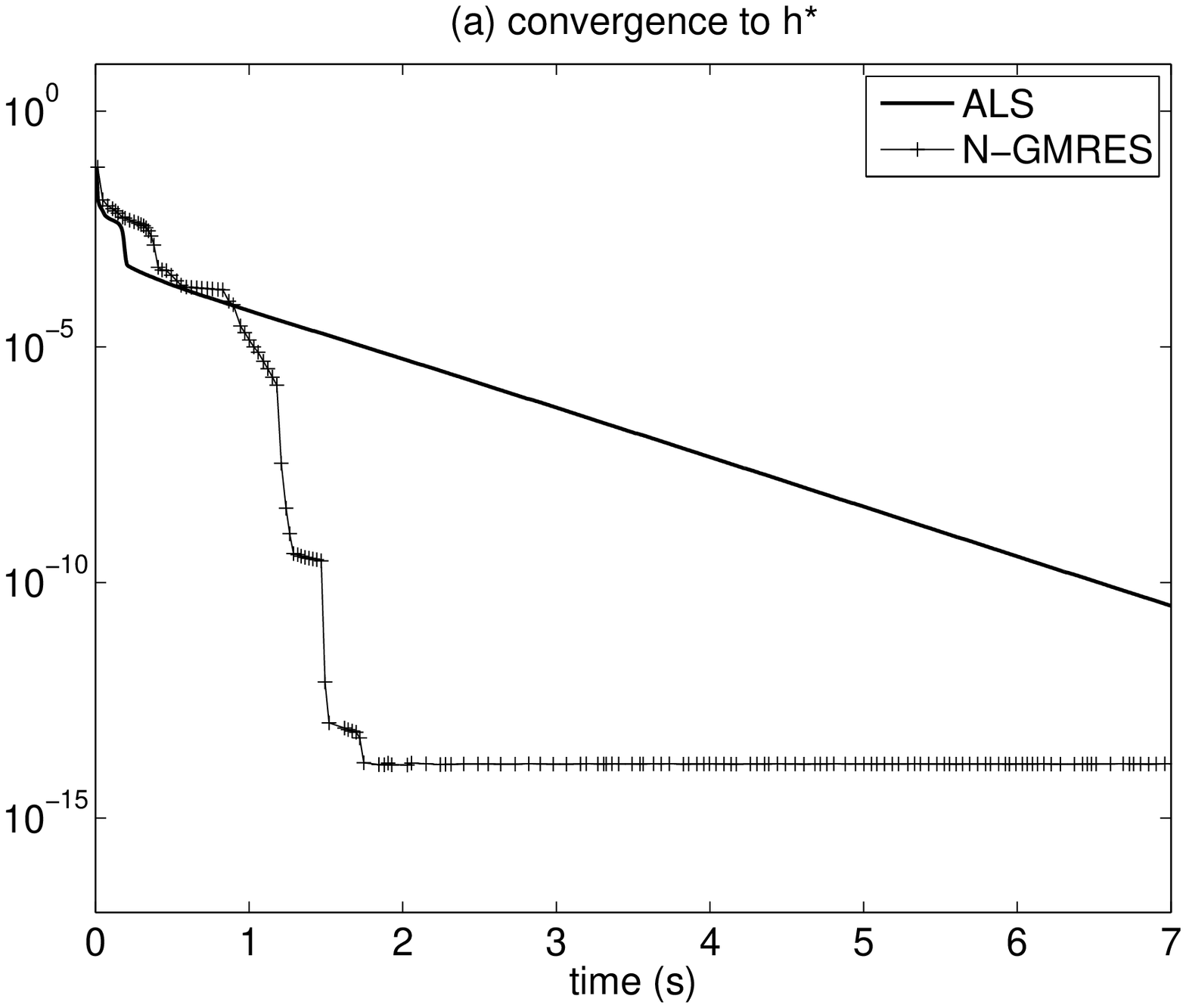}
  \includegraphics{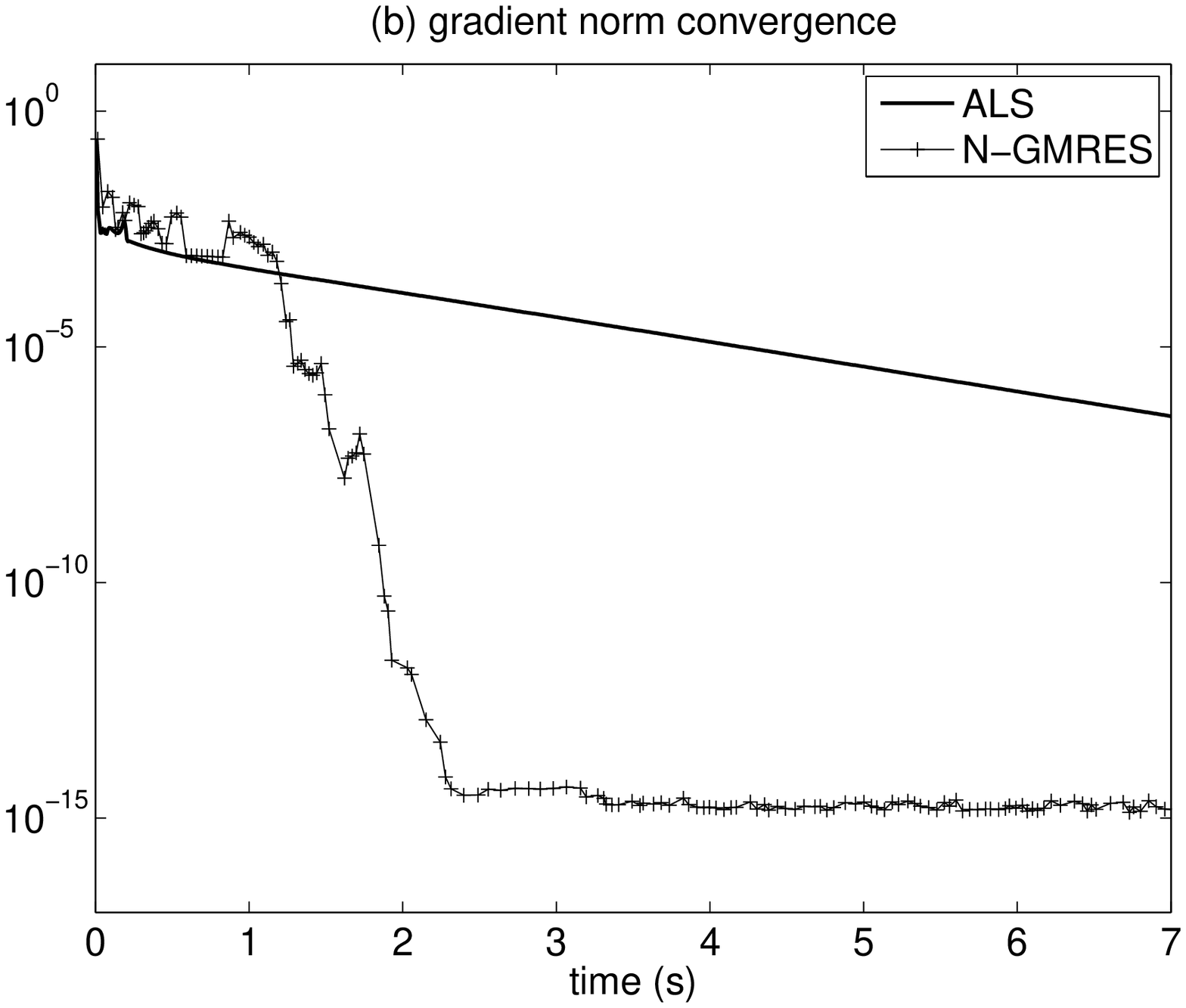}
  }
   \caption{Same as Fig.~\ref{fig:introHard}, but now convergence is plotted as a function of execution time. Even though N-GMRES iterations take more time per iteration, N-GMRES still substantially accelerates ALS, especially if accurate approximations are desired.}
   \label{fig:introHardTiming}
\end{figure}    

The rest of this paper is organized as follows.
In Section \ref{sec:Algorithm} we describe the proposed N-GMRES optimization algorithm for nonlinear optimization problems, and explain how it is applied to compute rank-$R$ canonical tensor decompositions. We also explain the interpretation of the N-GMRES acceleration method as a preconditioned GMRES nonlinear optimization method. In Section \ref{sec:NumDense} we test the N-GMRES optimization algorithm on dense tensor test cases, and in Section \ref{sec:NumSparse} we discuss sparse test problems. Conclusions are formulated in Section \ref{sec:conc}.

\section{N-GMRES Optimization Algorithm for Canonical Tensor Decomposition}
\label{sec:Algorithm}

In this section, we describe the proposed N-GMRES optimization algorithm for nonlinear optimization problems, and explain how it is applied to compute rank-$R$ canonical tensor decompositions. We start with a description of the N-GMRES optimization approach, and situate this discussion in the context of a general nonlinear optimization problem, since the approach is general.
We then compare the N-GMRES approach for optimization to GMRES for linear systems, explaining how the dual viewpoint of preconditioned GMRES and GMRES acceleration applies in the nonlinear optimization context. In our descriptions we closely follow the derivations and presentation of Washio and Oosterlee's nonlinear GMRES for PDE problems from \cite{WashioNGMRES-ETNA,OosterleeNGMRES-SISC}, which we propose to use as the main ingredient in our nonlinear optimization algorithm. We give extensive details because these details give precise insight into how the GMRES method generalizes to the nonlinear optimization problem. This is followed by a discussion of how N-GMRES is applied to CP optimization, giving details on the first-order optimality equations for CP and the line search algorithm we use in the numerical results of subsequent sections.

\subsection{N-GMRES Optimization Algorithm}
\label{subsec:N-GMRES}
Consider the following nonlinear optimization problem with associated first-order optimality equations:\\

\noindent
{\sc optimization problem II:}
\begin{align}
\text{find $\bu^*$ that minimizes }f(\bu).
\label{eq:fu}
\end{align}

\noindent
{\sc first-order optimality equations II:}
\begin{align}
\nabla f(\bu)=\bg(\bu)=0.
\label{eq:gu}
\end{align}
Assume that we have $i+1$ previous iterates $\bu_0,\bu_1,\ldots,\bu_{i-1},\bu_i$, and that we have a one-step (nonlinear) iterative update process $M(.)$ that generates a new approximation from an existing approximation:
\begin{align}
\bu_{new}=M(\bu_{old}).
\label{eq:Mu}
\end{align}
The goal of the N-GMRES optimization algorithm will be to accelerate the convergence of iterative update process $M(.)$.

Every iteration of the N-GMRES optimization algorithm consists of three steps.\\

\noindent
{\sc Step I:}

In the first step, we generate a preliminary new iterate $\bar{\bu}_{i+1}$ from the last iterate $\bu_i$ using one-step iterative update process $M(.)$:
\begin{align}
\bar{\bu}_{i+1}=M(\bu_{i}).
\label{eq:Mub}
\end{align}

\noindent
{\sc Step II:}

In the second step, we find an accelerated iterate $\hat{\bu}_{i+1}$ that is obtained by recombining previous iterates as follows:
\begin{align}
\hat{\bu}_{i+1}=\bar{\bu}_{i+1}+\sum_{j=0}^{i} \, \alpha_j \, (\bar{\bu}_{i+1}-\bu_j).
\label{eq:accel}
\end{align}
The N-GMRES algorithm seeks to determine the unknown coefficients $\alpha_j$ such that the two-norm of the gradient of the objective function evaluated at the accelerated iterate is small. We call $\bg(\hat{\bu}_{i+1})$ the residual at $\hat{\bu}_{i+1}$, and the objective is thus to determine the $\alpha_j$ such that the residual is minimized in the two-norm: we attempt to minimize $\|\bg(\hat{\bu}_{i+1})\|_2$. However, $\bg(.)$ is a nonlinear function of the $\alpha_j$ (more precisely, it is a high-order polynomial in the $\alpha_j$), and we proceed by linearization to allow for inexpensive computation of coefficients $\alpha_j$ that may approximately minimize $\|\bg(\hat{\bu}_{i+1})\|_2$. 
Using the following approximations
\begin{align}
\bg(\hat{\bu}_{i+1})&\approx \bg(\bar{\bu}_{i+1})+\sum_{j=0}^{i} \, \left. \frac{\partial \bg}{\partial \bu} \right|_{\bar{\bu}_{i+1}} \, \alpha_j \, (\bar{\bu}_{i+1}-\bu_{j}) \nonumber\\
  & \approx \bg(\bar{\bu}_{i+1})+\sum_{j=0}^{i} \, \alpha_j \, (\bg(\bar{\bu}_{i+1})-\bg(\bu_{j}))
  \label{eq:linearize}
\end{align}
we arrive at minimization problem
\begin{gather*}
\text{find coefficients $(\alpha_0, \ldots, \alpha_i)$ that minimize }\\
 \| \bg(\bar{\bu}_{i+1})+\sum_{j=0}^{i} \, \alpha_j \, (\bg(\bar{\bu}_{i+1})-\bg(\bu_{j})) \|_2.
\label{eq:minAlpha}
\end{gather*}
This is a standard least-squares problem that can be solved, for example, by using the normal equations.
Defining
\begin{align*}
 \boldsymbol{\alpha}&=(\alpha_0,\ldots,\alpha_i)^T,\\
 \bp_j&=\bg(\bar{\bu}_{i+1})-\bg(\bu_{j}),\\
 \bP&=\left[ \bp_0 | \ldots | \bp_j \right],
\end{align*}
we minimize $\|\bP\, \boldsymbol{\alpha} + \bg(\bar{\bu}_{i+1}) \|_2$, which leads to normal equation system
\begin{align}
\bP^T \, \bP \, \boldsymbol{\alpha}=-\bP^T \, \bg(\bar{\bu}_{i+1}).
\label{eq:normal}
\end{align}

\noindent
{\sc Step III:}

In the third step, we perform a line search that optimizes objective function $\bff(\bu)$ on a half line starting at preliminary iterate $\bar{\bu}_{i+1}$, which was generated in Step I, and connecting it with accelerated iterate $\hat{\bu}_{i+1}$, which was generated in Step II:
\begin{gather}
\text{find $\beta^* \in [0,\infty)$ that minimizes }\\
\bff(\bar{\bu}_{i+1}+\beta(\hat{\bu}_{i+1}-\bar{\bu}_{i+1})).
\label{eq:linesearch}
\end{gather}
This line search step is necessary, because without it erratic convergence behavior can occur, especially as long as iterates are far from a stationary point. In this case, the linearization steps in (\ref{eq:linearize}) may incur large errors, resulting in accelerated iterates $\hat{\bu}_{i+1}$ that may be bad approximations.
The result of the line search is finally selected as the new iterate $\bu_{i+1}$:
\begin{align}
\bu_{i+1}=\bar{\bu}_{i+1}+\beta^* (\hat{\bu}_{i+1}-\bar{\bu}_{i+1}).
\label{eq:new}
\end{align}

In practice, to limit the computational cost and especially the memory cost of the N-GMRES acceleration, we implement the algorithm with a windowed acceleration process with window size $w$, in which N-GMRES-accelerated iterates are generated based on the $w$ last iterations. 

Fig.~\ref{fig:N-GMRES} gives a schematic representation of the N-GMRES optimization algorithm, and Algorithm \ref{alg:N-GMRES} describes it in pseudo-code. (Note of course that the $w$ initial iterates required in the pseudo-code description can naturally be generated by applying the algorithm with a window size that gradually increases from one up to $w$, starting from a single initial guess.)
\begin{figure}[!htbp]
  \centering
  \scalebox{1.2}{
    \includegraphics{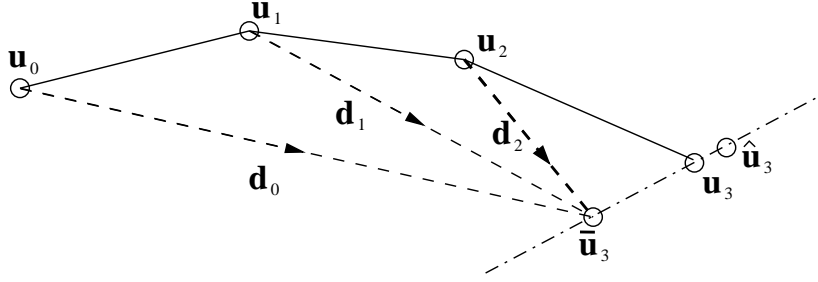}
  }
   \caption{Schematic representation of one iteration of the N-GMRES optimization algorithm. Given previous iterations $\bu_0$, $\bu_1$ and $\bu_2$, new iterate $\bu_3$ is generated as follows. In Step I, preliminary iterate $\bar{\bu}_3$ is generated by the one-step update process $M(.)$: $\bar{\bu}_3=M(\bu_2)$. In Step II, the nonlinear GMRES step, accelerated iterate $\hat{\bu}_3$ is obtained by determining the coefficients $\alpha_j$ in $\hat{\bu}_3=\bar{\bu}_3+\alpha_0 \bd_0+\alpha_1 \bd_1+\alpha_2 \bd_2$ such that the gradient of the objective function in $\hat{\bu}_3$ is approximately minimized. In Step III, the new iterate, $\bu_3$, is finally generated by a line search that minimizes the objective function $\bff(\bar{\bu}_{3}+\beta(\hat{\bu}_{3}-\bar{\bu}_{3}))$.}
   \label{fig:N-GMRES}
\end{figure}    

\begin{algorithm}[H]
\dontprintsemicolon

{\bf Input:} $w$ initial iterates $\bu_0, \ldots,\bu_{w-1}$.\;
\ \\
$i=w-1$\;
\Repeat{\text{convergence criterion satisfied}}{
{\sc Step I:} {\em (generate preliminary iterate by one-step update process $M(.)$)}\;
$\qquad \bar{\bu}_{i+1}=M(\bu_{i})$\;
{\sc Step II:}  {\em (generate accelerated iterate by nonlinear GMRES step)}\;
$\qquad \hat{\bu}_{i+1}=$gmres$(\bu_{i-w+1},\ldots,\bu_{i};\bar{\bu}_{i+1})$\;
{\sc Step III:}  {\em (generate new iterate by line search process)}\;
$\qquad \bu_{i+1}=$linesearch$(\bar{\bu}_{i+1}+\beta(\hat{\bu}_{i+1}-\bar{\bu}_{i+1}))$\;
$i=i+1$\;
}\;
\ \\
\caption{N-GMRES optimization algorithm (window size $w$)}
\label{alg:N-GMRES}
\end{algorithm}

We now give some additional remarks about the N-GMRES optimization algorithm proposed and its application to canonical tensor decomposition.

First, for Step I in the algorithm, we use the ALS algorithm as the one-step update process $M(.)$. The ALS algorithm for CP decomposition will be described in Section \ref{subsec:optimality}, along with specific expressions for the gradient of the objective function and the first-order optimality equations of Optimization Problem I. The one-step update process $M(.)$ can be interpreted as the preconditioner of the nonlinear GMRES procedure, as will be explained in Section \ref{subsec:GMRES}.

Second, Step I and Step II in the N-GMRES optimization algorithm are entirely analogous to the nonlinear GMRES approach for PDE systems that was proposed by Washio and Oosterlee in \cite{WashioNGMRES-ETNA}. In their applications, they employ nonlinear GMRES to drive the residuals of the PDEs to zero, and use multigrid as the preconditioner \cite{WashioNGMRES-ETNA,OosterleeNGMRES-SISC,OosterleeNGMRES-ETNA}. In the present context of canonical tensor decomposition as a nonlinear optimization problem, we drive the residual of the first-order optimality equations (the gradient of the objective function) to zero, and use ALS as the preconditioner.

Third, Step III in the N-GMRES optimization algorithm performs a role analogous to the selection mechanism proposed by Washio and Oosterlee in \cite{WashioNGMRES-ETNA} to reduce erratic convergence. They select either the preliminary iterate or the accelerated one, based on ingenious but somewhat ad-hoc criteria that consider changes in the norms of the residuals and in the solution updates. Instead, in the present context of nonlinear optimization problems, we can exploit the final goal of minimizing the objective function to control erratic behavior, and rather than selecting either the preliminary iterate or the accelerated one, we perform a line search on the line connecting the two. Not only does this control erratic behavior, but it also provides an improved new iterate, at the cost of a line search.

Fourth, it may be beneficial to restart the N-GMRES acceleration with window size one when indications arise that the current window contains iterates that harm the acceleration process. In \cite{WashioNGMRES-ETNA} the GMRES procedure is restarted based on criteria similar to the ones that are used to choose between the preliminary or accelerated iterate. In the context of N-GMRES for nonlinear optimization problems, we propose the following simple criterion: we restart whenever the search direction of the line search, $\hat{\bu}_{i+1}-\bar{\bu}_{i+1}$, does not point in a descent direction (so we set the window size back to one whenever $\bg(\bar{\bu}_{i+1})^T \, (\hat{\bu}_{i+1}-\bar{\bu}_{i+1})>0$). The motivation for this is simple: we expect the acceleration mechanism to be effective close to a stationary point. In that case, the accelerated iterate is expected to be an improvement over the preliminary iterate, and the search direction is expected to be a descent direction. If this is not the case, this is taken as an indication that the current N-GMRES sequence does not help, and N-GMRES is restarted. Our numerical tests have indicated that this simple approach indeed tends to be beneficial for speed of convergence, for the test problems we considered. Note that windowing with restarts can be implemented efficiently in an elegant way, and we refer to the detailed pseudocode in \cite{WashioNGMRES-ETNA} for this.

Fifth, another practical point is that the normal equation system in (\ref{eq:normal}) may become ill-conditioned since the vectors $\bp_j$ may become nearly linearly dependent. One way to deal with this, as suggested in \cite{WashioNGMRES-ETNA}, is to add a small multiple of the identity matrix, $\delta \, \bI$, to the normal equation operator. This was sufficient for the numerical tests we performed, and \cite{WashioNGMRES-ETNA} provides further theoretical justification for this fix.

Sixth, for the line search of Step III, we use in our numerical experiments the Mor\'{e}-Thuente line search method
\cite{MoreThuente} as implemented in the Poblano toolbox for Matlab \cite{POBLANO}. This line search method was also used in the N-CG method for canonical tensor decomposition in \cite{AcarCPOPT}. This is a rather sophisticated line search method that employs multiple function and gradient evaluations to improve the solution.

Finally, we briefly discuss the computational cost of the N-GMRES optimization algorithm.
In terms of memory cost, for window size $w$ the algorithm requires storing $w$ previous solution vectors and residual vectors. In the numerical results presented below we will see that, for the CP decomposition test problems we consider, a window size of approximately $w=20$ is optimal in terms of computing time. This requires substantial additional memory, however, and memory can obviously be traded for speed.
In terms of the computing time, the dominant costs in each iteration of N-GMRES nonlinear optimization algorithm \ref{alg:N-GMRES} are applying $M(.)$ in Step I, computing the gradient of the preliminary iterate, $\bg(\bar{\bu}_{i+1})$, in Step II, and the function and gradient evaluations in the line search of Step III. Since the optimization problems we solve in this paper are quite involved and the ALS, function and gradient evaluations are very expensive, these calculations strongly dominate all others in practice. The relevant extra costs of the N-GMRES optimization approach compared to just applying the one-step update process $M(.)$ are thus the additional function and gradient evaluations, and the cost of building and solving the normal equation system (\ref{eq:normal}) and computing the accelerated iterate is negligible in practice. The number of function and gradient evaluations per line search depend on the line search method used and the accuracy parameters one chooses for the line search calculations, which in turn influence the number of overall iterations required to achieve a certain accuracy for the stationary point.
It is thus difficult to say much in general about the additional cost of the line search step. The numerical results to be presented below show that, for two relevant classes of test problems for canonical tensor decomposition, N-GMRES optimization can be significantly faster than stand-alone iteration by applying $M(.)$, especially when high accuracy is desired. We will at this point just give one example. For the N-GMRES CP calculation shown in Fig.~\ref{fig:introHardTiming}, up to an accuracy of $|h-h^*|<10^{-10}$ (which required 54 iterations), about 40\% of the time is spent in the ALS iterations (54 calls), 15\% in the calculation of $\bg(\bar{\bu}_{i+1})$ (54 calls), and about 30\% in the function and gradient evaluations in the line search procedure (137 calls to evaluate $\bff$ and $\bg$). The average number of function/gradient evaluations per line search was about 2.5. While the generality of this single example is of course limited, it does illustrate that N-GMRES optimization algorithm \ref{alg:N-GMRES} mainly adds overhead in terms of additional function and residual evaluations for the GMRES and the line search steps. But it is precisely these additional calculations that potentially lead to a great reduction in the number of iterations and overall execution time, as we will illustrate with extensive numerical tests below.

\subsection{Interpretation as a GMRES Method: Acceleration and Preconditioning}
\label{subsec:GMRES}
In this subsection we briefly recall some particulars of the GMRES method for linear equation system
\begin{align}
\bA \, \bu=\bb,
\label{eq:Aub}
\end{align}
and give a detailed exposition of the parallels with the N-GMRES optimization algorithm proposed in the previous subsection.
We follow the presentation of \cite{WashioNGMRES-ETNA}, but explain in detail how the parallels apply in the nonlinear optimization context. As in \cite{WashioNGMRES-ETNA}, we discuss GMRES for (\ref{eq:Aub}) in a particular, perhaps non-standard way, which will allow us to draw parallels to the nonlinear optimization algorithm.

Our starting point for explaining the principles of preconditioned GMRES for linear equation system (\ref{eq:Aub}) is to consider so-called stationary iterative methods for (\ref{eq:Aub}) of the following form:
\begin{align}
\bu_{i+1}=\bu_i+\bM^{-1}\, \br_i.
\label{eq:stat}
\end{align}
Here, $\br_i$ is the $i$th residual, defined by
\begin{align}
\br_i=\bb-\bA\,\bu_i,
\label{eq:resid}
\end{align}
and matrix $\bM$ is an approximation of $\bA$ that has an easily computable inverse, i.e., $\bM^{-1}\approx\bA^{-1}$.
For example, $\bM$ can be chosen to correspond to Gauss-Seidel or Jacobi iteration, or to a multigrid cycle \cite{WashioNGMRES-ETNA}.
General form (\ref{eq:stat}) of a stationary iterative method can be motivated as follows. Defining the error of the $i$th iterate as $\be_i=\bu-\bu_i$ (with $\bu$ the exact solution of (\ref{eq:Aub})), it is easy to see that $\bA\,\be_i=\br_i$. Observing that $\bu=\bu_i+\be_i=\bu_i+\bA^{-1}\,\br_i$, and that knowledge of $\bA^{-1}$ would (obviously) lead to the exact solution in one step, one can conclude that, if one does not know $\bA^{-1}$ but does have access to an easily computable approximation $\bM^{-1}$, update formula (\ref{eq:stat}) may lead to a useful iteration process.

Consider a sequence of iterates $\bu_0, \ldots,\bu_i$ generated by update formula (\ref{eq:stat}), starting from some initial guess $\bu_0$. Note that the residuals of these iterates are related as follows:
\begin{align}
 \br_i&=\bb-\bA\,\bu_i \nonumber \\
        &=(\bI-\bA\bM^{-1})\,\br_{i-1} \nonumber \\
        &=(\bI-\bA\bM^{-1})^i\,\br_0. 
\label{eq:residrel}
\end{align}
This motivates the definition of the following vector spaces:
\begin{align}
V_{1,i+1}&=\mathop{span}\{ \br_0,\ldots,\br_i\}, \nonumber\\
V_{2,i+1}&=\mathop{span}\{ \br_0, \bA \bM^{-1} \, \br_0, (\bA \bM^{-1})^2 \, \br_0\}, \ldots, (\bA \bM^{-1})^i \, \br_0\} \nonumber\\
  &=K_{i+1}(\bA \bM^{-1},\br_0), \label{eq:V2}\\
V_{3,i+1}&=\mathop{span}\{ \bM \, (\bu_1-\bu_0),  \bM \, (\bu_2-\bu_1),\ldots,  \bM \, (\bu_{i+1}-\bu_i) \},\nonumber \\
V_{4,i+1}&=\mathop{span}\{ \bM \, (\bu_{i+1}-\bu_0),  \bM \, (\bu_{i+1}-\bu_1),\ldots,  \bM \, (\bu_{i+1}-\bu_i) \}.\label{eq:V4}
\end{align}
Vector space $V_{2,i+1}$ is the so-called Krylov space $K_{i+1}(\bA \bM^{-1},\br_0)$ of order $i+1$, generated by matrix $\bA \bM^{-1}$ and vector $\br_0$.
It is easy to see that the vector spaces defined above are equal \cite{WashioNGMRES-ETNA}:
\begin{lemma}{$V_{1,i+1}=V_{2,i+1}=V_{3,i+1}=V_{4,i+1}.$}
\label{lem:spaces}
\end{lemma}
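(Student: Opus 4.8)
The plan is to show that all four spaces coincide by proving each of $V_{2,i+1}$, $V_{3,i+1}$, $V_{4,i+1}$ equals $V_{1,i+1}$. The single recurring tool is the update formula (\ref{eq:stat}), which rearranges to the identity $\bM\,(\bu_{k+1}-\bu_k)=\br_k$ for every $k$, together with the residual recursion (\ref{eq:residrel}). Since the spanning sets all have the same cardinality, in each case it suffices to exhibit an invertible change of basis between the generators.

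First I would dispose of $V_{3,i+1}=V_{1,i+1}$, which is immediate: applying $\bM\,(\bu_{k+1}-\bu_k)=\br_k$ to the generators of $V_{3,i+1}$ turns them literally into $\br_0,\ldots,\br_i$, so the two spanning sets coincide. Next, for $V_{4,i+1}=V_{1,i+1}$ I would telescope: writing $\bu_{i+1}-\bu_j=\sum_{l=j}^{i}(\bu_{l+1}-\bu_l)$ and multiplying by $\bM$ gives $\bM\,(\bu_{i+1}-\bu_j)=\sum_{l=j}^{i}\br_l$, so each generator of $V_{4,i+1}$ is a partial sum of residuals and hence lies in $V_{1,i+1}$. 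The reverse inclusion follows because consecutive generators differ by exactly one residual, so differencing recovers each $\br_j$; equivalently, the matrix relating the two spanning sets is triangular with unit diagonal and therefore invertible.

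The remaining and slightly more substantial step is $V_{2,i+1}=V_{1,i+1}$, for which I would argue by induction on the order $k$ that $\mathrm{span}\{\br_0,\ldots,\br_k\}=\mathrm{span}\{\br_0,\bA\bM^{-1}\br_0,\ldots,(\bA\bM^{-1})^k\br_0\}$. The base case $k=0$ is trivial. For the inductive step, expanding $\br_k=(\bI-\bA\bM^{-1})^k\br_0=\sum_{m=0}^{k}\binom{k}{m}(-1)^m(\bA\bM^{-1})^m\br_0$ isolates a top term $(-1)^k(\bA\bM^{-1})^k\br_0$ plus lower powers of $\bA\bM^{-1}$ applied to $\br_0$; by the inductive hypothesis those lower-order terms already lie in $\mathrm{span}\{\br_0,\ldots,\br_{k-1}\}$. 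Hence $\br_k$ and $(\bA\bM^{-1})^k\br_0$ are interchangeable modulo the previously constructed space, which gives both inclusions at order $k$. Taking $k=i$ finishes this case, and chaining the three identities yields the lemma.

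The only point requiring care — and the main (mild) obstacle — is precisely in the $V_{2,i+1}=V_{1,i+1}$ step: one must observe that the coefficient of the top power $(\bA\bM^{-1})^k\br_0$ in the binomial expansion is $(-1)^k\neq 0$. This nonvanishing is what makes the triangular change of basis between the residuals and the Krylov vectors invertible, upgrading the argument from a chain of inclusions to genuine equality at each order. Everything else is routine bookkeeping with telescoping sums and the iteration identity.
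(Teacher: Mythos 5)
Your proof is correct and follows essentially the same route as the paper: the identity $\bM\,(\bu_{j+1}-\bu_j)=\br_j$ from (\ref{eq:stat}), the residual relation (\ref{eq:residrel}), and telescoping sums are exactly the tools the paper uses, the only cosmetic difference being that you compare each of $V_{2,i+1},V_{3,i+1},V_{4,i+1}$ to $V_{1,i+1}$ directly rather than chaining the equalities. Your explicit binomial expansion and the observation that the leading coefficient $(-1)^k$ is nonzero simply spell out what the paper compresses into ``follows by recursion.''
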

\begin{proof}
First, $V_{1,i+1}=V_{2,i+1}$ by (\ref{eq:residrel}), which directly shows that $\br_j \in V_{2,i+1}$ for all $j$, and $(\bA \bM^{-1})^j \, \br_0 \in V_{1,i+1}$ for all $j$ follows by recursion.\\
Second, $V_{2,i+1}=V_{3,i+1}$ because $M\,(\bu_{i+1}-\bu_i)=\br_i$, by (\ref{eq:stat}).\\
Third, $V_{3,i+1}=V_{4,i+1}$ because, for $k<i+1$, $\bu_{i+1}-\bu_k=\sum_{j=k+1}^{i+1} \, (\bu_j-\bu_{j-1})$, and
$\bu_k-\bu_{k-1}=(\bu_{i+1}-\bu_{k-1})-(\bu_{i+1}-\bu_k).$
\end{proof}

We know that $\bM \, (\bu_{i+1}-\bu_i) \in K_{i+1}(\bA \bM^{-1},\br_0)$. The GMRES procedure can be seen as a way to accelerate stationary iterative method (\ref{eq:stat}), by recombining iterates (or, equivalently, by reusing residuals). In particular, we seek a better approximation $\hat{\bu}_{i+1}$, with $\bM \, (\hat{\bu}_{i+1}-\bu_i)$ in the Krylov space $K_{i+1}(\bA \bM^{-1},\br_0)$, such that $\hat{\br}_{i+1}=\bb-\bA\,\hat{\bu}_{i+1}$ has minimal two-norm.
In other words, we seek optimal coefficients $\beta_j$ in
\begin{align*}
\bM \, (\hat{\bu}_{i+1}-\bu_i) &= \sum_{j=0}^{i} \, \beta_j \, \bM \, (\bu_{i+1}-\bu_j),\\
\end{align*}
or in
\begin{align*}
\hat{\bu}_{i+1}&=\bu_i + \sum_{j=0}^{i} \, \beta_j \, (\bu_{i+1}-\bu_j)\\
 &= \bu_{i+1} - (\bu_{i+1}-\bu_i)+ \sum_{j=0}^{i} \, \beta_j \, (\bu_{i+1}-\bu_j).
\end{align*}
Equivalently, we can seek optimal coefficients $\alpha_j$ in
\begin{align}
\hat{\bu}_{i+1}&=\bu_{i+1} + \sum_{j=0}^{i} \, \alpha_j \, (\bu_{i+1}-\bu_j),
\label{eq:GMRESopt}
\end{align}
such that $\|\hat{\br}_{i+1}\|_2$ is minimized (which leads to a small least-squares problem). Apart from the power of the minimization principle, GMRES for linear systems is especially powerful because this minimization can be done very efficiently, using the Arnoldi iteration to generate orthogonal bases for Krylov spaces of increasing order \cite{SaadGMRES}.
For nonlinear problems, such an advantageous implementation is not possible, but the general ideas of optimal acceleration remain powerful and can still be applied.

We now explain two complementary viewpoints of GMRES for linear system (\ref{eq:Aub}). We have presented the method as a way to accelerate simple one-step stationary iterative method (\ref{eq:stat}). A more customary way to see GMRES is in terms of preconditioning. With $\bM=\bI$, the approach described above reduces to `non-preconditioned' GMRES. Preconditioned GMRES can then also be derived by applying non-preconditioned GMRES to the preconditioned linear equation system \linebreak
$\bA \bM^{-1} (\bM \bu)=\bb$. In this viewpoint, the matrix $\bM^{-1}$ is called the preconditioner matrix, because its role is viewed as to pre-condition the spectrum of the linear system operator such that the (non-preconditioned) GMRES method applied to 
$(\bA \bM^{-1}) \by=\bb$ becomes more effective. By extension, it is also customary to say that the stationary iterative method preconditions GMRES (in the sense of, for example, Gauss-Seidel or multigrid being used as preconditioners for GMRES). In this viewpoint, the role of the stationary iterative method is to generate a preconditioned residual that builds the Krylov space.

The stage is now set for discussing the interpretation of the N-GMRES optimization algorithm of the previous subsection as a preconditioned GMRES method (along the lines of \cite{WashioNGMRES-ETNA} for nonlinear GMRES applied to nonlinear PDE systems).
Before we do so, we need to add one important remark regarding GMRES for linear systems. In the presentation above, all iterates $\bu_j$ for $j=0,\ldots,i$ (for instance, in the right-hand side of (\ref{eq:GMRESopt})) refer to iterates generated by stationary iterative method (\ref{eq:stat}), without acceleration. However, the formulas remain valid when we use accelerated iterates instead; this merely changes the values of the coefficients $\alpha_j$, but leads to the same accelerated iterates \cite{WashioNGMRES-ETNA}. The reason is that all the GMRES optimization does is to produce improved iterates with residuals that are optimal elements of the Krylov spaces, but the Krylov spaces are still the ones generated by the residuals of the stationary iterative method, and do themselves not change, due to linearity.

The N-GMRES optimization algorithm presented in Subsection \ref{subsec:N-GMRES} can be interpreted as a preconditioned GMRES method as follows. 
With $\bu_{i+1}$ the preliminary $i+1$st iterate generated by (\ref{eq:stat}) and the $\bu_j$ with $j\le i$ the previous accelerated iterates, the expression for the accelerated iterate $\hat{\bu}_{i+1}$ we seek in (\ref{eq:GMRESopt}) for the linear case corresponds directly to the expression in (\ref{eq:accel}) for the nonlinear case. The definition of $V_{2,i+1}$ in (\ref{eq:V2}) does not provide a nonlinear generalization for the Krylov space in which we seek an optimal approximation in the linear case, but the definition of $V_{4,i+1}$ (\ref{eq:V4}) does: the image of the linear Krylov space $K_{i+1}(\bA \bM^{-1},\br_0)$ under the preconditioning matrix $\bM^{-1}$ is $\mathop{span}\{(\bu_{i+1}-\bu_0),(\bu_{i+1}-\bu_1),\ldots,(\bu_{i+1}-\bu_i)\}$, and this can trivially be generalized to the nonlinear case: the nonlinear generalization of the Krylov space is precisely given by this vector space. And it is precisely in this space that we seek an optimal vector, both in the linear and in the nonlinear case (see (\ref{eq:GMRESopt}) and (\ref{eq:accel}), respectively). Stationary iterative method (\ref{eq:stat}) is the preconditioning process for GMRES in the linear case, responsible for generating a preliminary approximation with the help of a residual calculation and a preconditioning matrix, and in the same way one-step approximation method (\ref{eq:Mub}) is the preconditioner for GMRES in the nonlinear case. In the present case of the N-GMRES optimization algorithm applied to canonical tensor decomposition, we can say that ALS is used as the preconditioner of N-GMRES. The alternative viewpoint is that N-GMRES accelerates ALS.

A variety of preconditioners exist for linear systems of equations, and research in this area has been a very active field ever since GMRES was proposed. In this paper, we accelerate ALS for the canonical tensor decomposition optimization problem, but the preconditioning interpretation of the N-GMRES optimization algorithm suggests that it may be worthwhile to explore different N-GMRES preconditioners for the canonical tensor decomposition optimization problem in future work. Similarly, it may be interesting to explore preconditioned N-GMRES algorithms for other nonlinear optimization problems. (Or, put in another way, there appears to be potential for N-GMRES to accelerate already existing iterative methods for other nonlinear optimization problems.)


\subsection{First-Order Optimality Equations for the CP Optimization Problem}
\label{subsec:optimality}
In this subsection, we briefly recall the first-order optimality equations for CP Optimization Problem I, following 
\cite{AcarCPOPT}. The gradient of objective function (\ref{eq:fA}) can be written as a vector of matrices
\begin{align}
\nabla f(\mA_R)=\vec{\bG}(\mA_R)=(\bG^{(1)},\ldots,\bG^{(N)}),
\label{eq:GA}
\end{align}
with $\bG^{(n)} \in \mathbb{R}^{I_n \times R}$ ($n=1,\ldots,N$). The matrices $\bG^{(n)}$ for $n=1,\ldots,N$ are given by
\begin{align}
\bG^{(n)}=-\bbT_{(n)} \bar{\boldsymbol{\Phi}}^{(n)}+\bA^{(n)}\, \bar{\boldsymbol{\Gamma}}^{(n)},
\label{eq:Gn}
\end{align}
with variables defined as follows.

With $\mT$ and $\mA_R \in \mathbb{R}^{I_1 \times \ldots \times I_N}$, define size parameters
\begin{align}
K&=\prod_{l=1}^{N} \, I_l,\\
\bar{K}^{(n)}&=\prod_{
\substack{
      l=1 \\
      l \ne n}
}^{N} \, I_l.
\label{eq:K}
\end{align}
Then $\bbT_{(n)} \in \mathbb{R}^{I_n \times \bar{K}^{(n)}}$ is the mode-$n$ matricization of $\mT$, obtained by stacking the $n$-mode fibres of $\mT$ in its columns in a regular way, see \cite{KoldaSIREV}.
Similar to size parameters $K$ and $\bar{K}^{(n)}$, we define matrices $\boldsymbol{\Phi} \in \mathbb{R}^{K \times R}$ and $\bar{\boldsymbol{\Phi}}^{(n)} \in
\mathbb{R}^{\bar{K}^{(n)} \times R}$ by
\begin{align}
\boldsymbol{\Phi}&=\bA^{(1)} \odot \ldots \odot \bA^{(N)},\\
\bar{\boldsymbol{\Phi}}^{(n)}&=\bA^{(1)} \odot \ldots \odot \bA^{(n-1)} \odot \bA^{(n+1)} \odot \ldots \odot \bA^{(N)},
\label{eq:Phi}
\end{align}
where $\odot$ is the Khatri-Rao product \cite{KoldaSIREV}.
Finally, the matrices $\boldsymbol{\Gamma} \in \mathbb{R}^{R \times R}$ and $\bar{\boldsymbol{\Gamma}}^{(n)} \in
\mathbb{R}^{R \times R}$ are defined by
\begin{align}
\boldsymbol{\Gamma}&=(\bA^{(1)T} \bA^{(1)}) * \ldots * (\bA^{(N)T} \bA^{(N)}),\\
\bar{\boldsymbol{\Gamma}}^{(n)}&=(\bA^{(1)T} \bA^{(1)}) * \ldots *
(\bA^{(n-1)T} \bA^{(n-1)}) * \nonumber \\ 
& \qquad (\bA^{(n+1)T} \bA^{(n+1)}) * \ldots * (\bA^{(N)T} \bA^{(N)}),
\label{eq:Gamma}
\end{align}
where $*$ means element-wise multiplication.

The first-order optimality equations are then given by
\begin{align}
\bG^{(n)}=0=-\bbT_{(n)} \bar{\boldsymbol{\Phi}}^{(n)}+\bA^{(n)}\, \bar{\boldsymbol{\Gamma}}^{(n)}, \qquad n=1,\ldots,N.
\label{eq:foopt}
\end{align}
They offer an easy way to describe the ALS method for CP optimization: an ALS iteration proceeds by sequentially solving for each of the factor matrices $\bA^{(1)}, \ldots , \bA^{(N)}$ using the corresponding optimality equation $\bG^{(n)}=0$, updating the matrices $\bar{\boldsymbol{\Phi}}^{(n)}$ and $\bar{\boldsymbol{\Gamma}}^{(n)}$ along the way. As such, it is an example of a nonlinear block Gauss-Seidel method to solve the nonlinear equation system provided by the first-order optimality equations. The Matlab Tensor Toolbox \cite{KoldaTOOLBOX} implements efficient methods for computing the product $\bbT_{(n)} \bar{\boldsymbol{\Phi}}^{(n)}$ both for sparse and dense tensors.

One point that deserves special attention is the following: if no special care is taken during ALS iterations, it may happen that vectors in some modes diverge while others approach zero size, even if the process is converging to the desired CP decomposition; this is indeed possible due to the scaling indeterminacy. This type of anomalous behavior can be avoided by normalizing the columns of the factor matrices. After each complete ALS iteration, for each rank-one term we first normalize all factor vectors to size one, and then distribute the product of the normalization factors evenly to all factor vectors (using the $n$th root). We also order the rank-one terms in order of decreasing product of normalization factors. We apply this normalization and reordering after each ALS iteration, but also after each calculation of new accelerated or line search iterates in the N-GMRES optimization procedure. This controls possible erratic behavior in the order of the rank-one components and balances the relative sizes of the factor vectors as they are stacked in the iterate vectors $\bu_i$, which is expected to be beneficial for the convergence of the N-GMRES acceleration process. This consistent normalization thus appears to take care of the scaling and permutation indeterminacies present in the CP optimization problem, at least for the problems we have tested, as our numerical results confirm.

\subsection{Application of N-GMRES to CP Optimization: Further Particulars and Parameter Choices for Numerical Tests}
\label{subsec:particulars}
In this subsection we discuss some further particulars of our application of N-GMRES to CP optmization, and discuss some parameters for the numerical results to be presented in the next two sections.
As we mentioned before, and as in \cite{AcarCPOPT}, we utilize the Mor\'{e}-Thuente line search method
\cite{MoreThuente} as implemented in the Poblano toolbox for Matlab \cite{POBLANO}. For all experiments, the Mor\'{e}-Thuente line search parameters used were as follows: $10^{-4}$ for the function value tolerance, $10^{-2}$ for the gradient norm tolerance, a starting search step length of 1 and a maximum of 20 iterations. These values were also used for the N-CG tests in \cite{AcarCPOPT}, and we use them for our N-CG comparison runs as well. 
We use the N-CG variant with Polak-Ribi\`{e}re update formula \cite{Nocedal}.
As suggested in \cite{WashioNGMRES-ETNA}, the parameter $\delta$ in the term $\delta \, \bI$ added to normal equation matrix $\bP^T \, \bP$ in (\ref{eq:normal}), was chosen as $\epsilon$ times the size of the largest diagonal element of $\bP^T \, \bP$, with $\epsilon=10^{-12}$. We normally choose the N-GMRES window size $w$ equal to 20, which is confirmed to be a good choice in numerical tests described below. All initial guesses are determined uniformly randomly, as in \cite{ComonLS}, and when we compare different methods they are given the same random initial guess. All numerical tests were run on a laptop with a dual-core 2.53 GHz Intel Core i5 processor and 4GB of 1067 MHz DDR3 memory. Matlab version 7.11.0.584 (R2010b) 64-bit (maci64) was used for all tests.

\section{Numerical Results: Dense Tensor Test Problem}
\label{sec:NumDense}
\begin{figure}[!htbp]
  \centering
  \scalebox{0.5}{
  \includegraphics{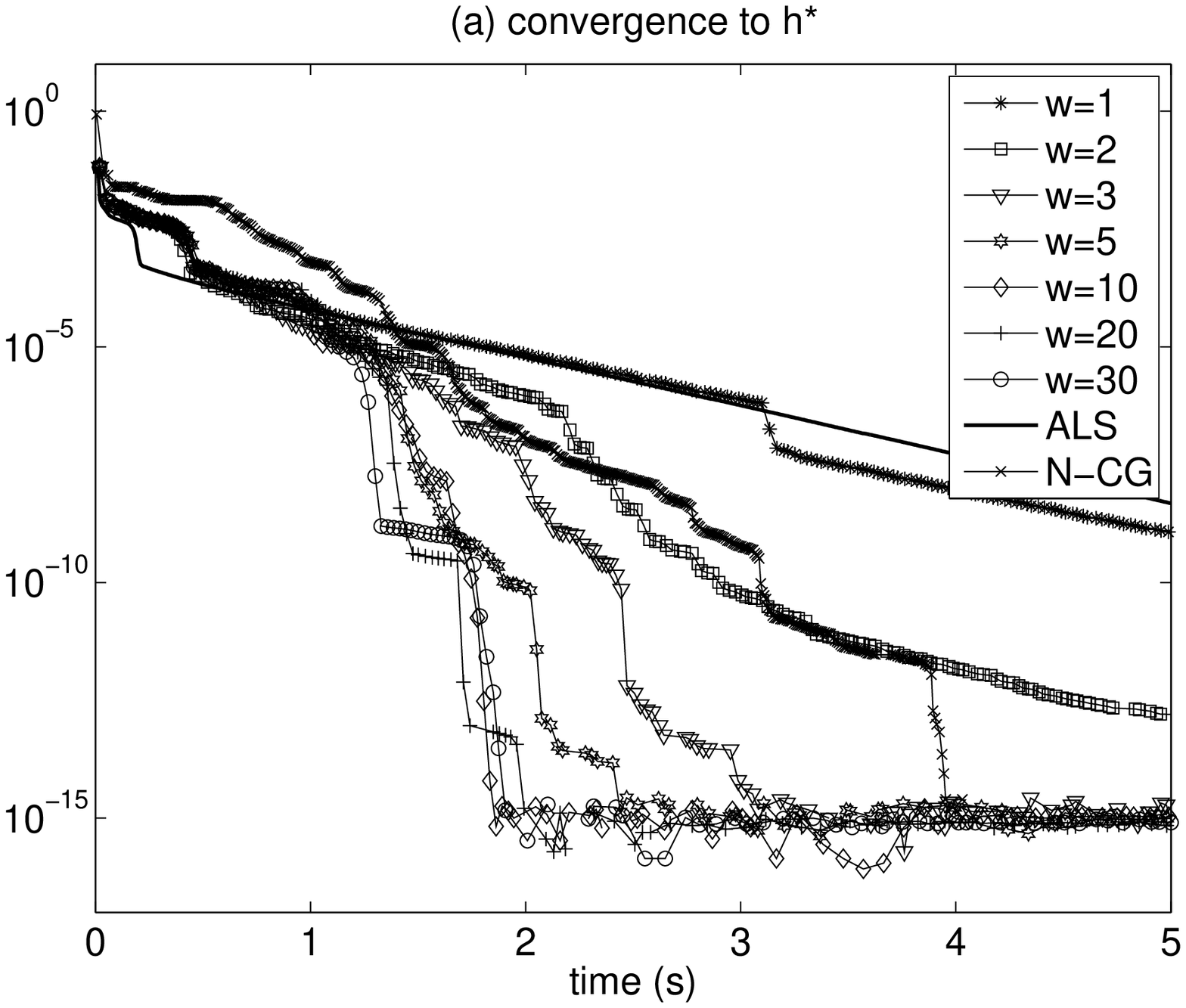}
  }
  \scalebox{0.5}{
  \includegraphics{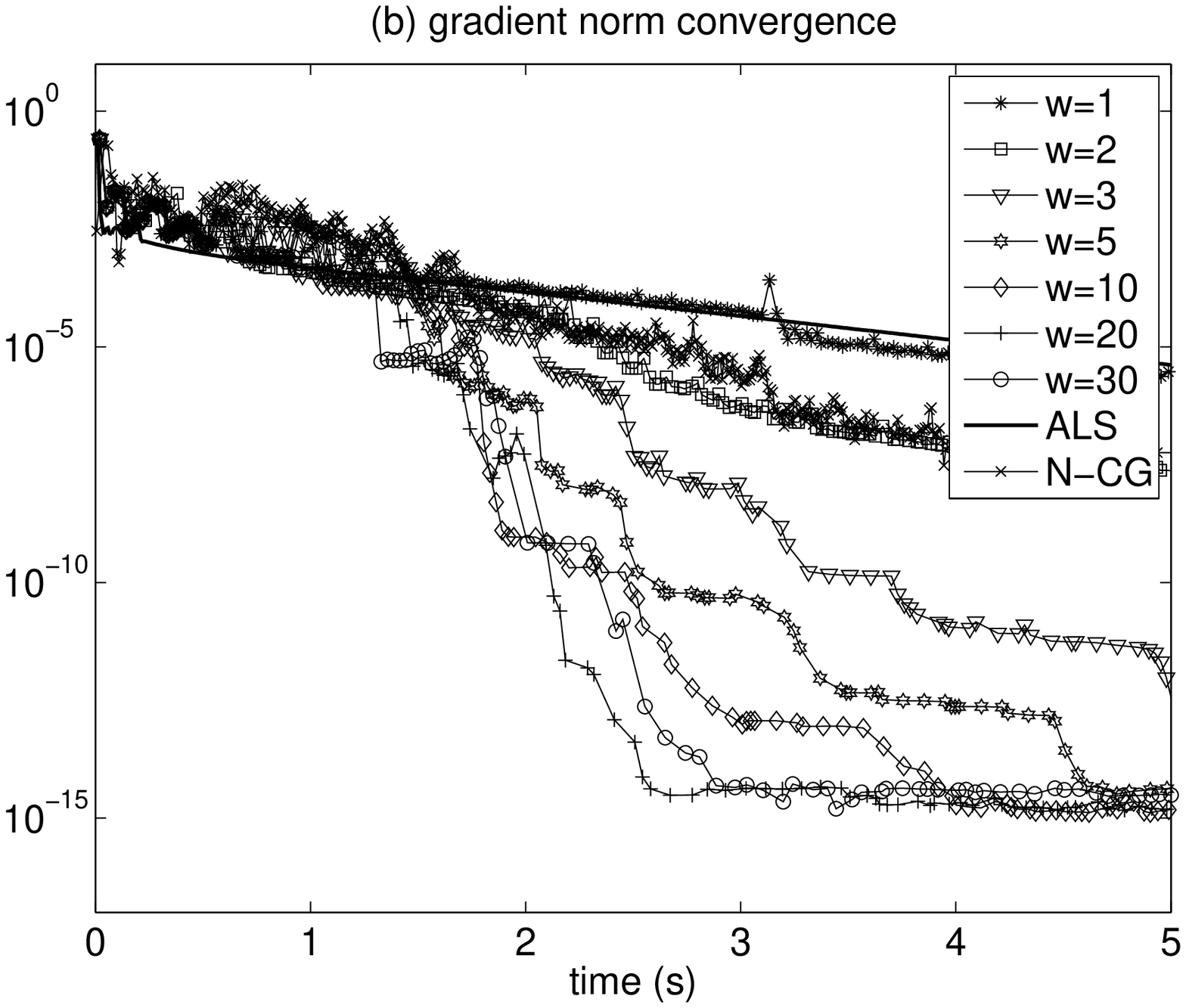}
  }
   \caption{Test Problem I with parameters ($s=50, c=0.9, R=3, l_{1}=1, l_{2}=1$). Convergence plots as a function of the N-GMRES window size, $w$. (a) Convergence of $|h(\mA_R^{(i)})-h^*|$, where $h^*$ is the value of the normalized distance measure, (\ref{eq:hA}), in the stationary point the method converges to. (b) Convergence of the normalized gradient of the objective function, $\|\bg(\mA_R^{(i)})\|_2$/$\|\mT\|_F$, indicating convergence to a stationary point. Window size $w=20$ emerges as a good choice for fast convergence when high accuracy is required.}
   \label{fig:denseDetTiming}
\end{figure}    

In this and the next section, we present extensive numerical tests for the N-GMRES optimization algorithm, compared with ALS and the N-CG algorithm of \cite{AcarCPOPT}, accessed via the Matlab Tensor Toolbox \cite{KoldaTOOLBOX}. In this section we present a class of dense test problems, and the next section deals with a sparse problem class.

Our dense test problem generates 3-way data tensors of various sizes starting from a canonical tensor with specified rank and random factor matrices that are modified to have pre-specified collinearity $c$, and noise is added. This is a standard CP test problem from \cite{TomasiPARAFAC}, and was also used in \cite{AcarCPOPT}. The collinearity between different columns of factor matrix $\bA^{(n)}$ is defined as
\begin{align}
c=\frac{\ba_r^{(n)T} \, \ba_s^{(n)}}{\|\ba_r^{(n)}\|_2 \, \|\ba_s^{(n)}\|_2}.
\label{eq:collin}
\end{align}
Collinearity close to 1 is known to lead to slow ALS convergence \cite{TomasiPARAFAC}.\\

\noindent
{\sc Test Problem I.} Generate a 3-way data tensor $\mT$ of noise-free rank $R$, size $s \times s \times s$, collinearity $c$, and noise levels $l_1$ and $l_2$ as follows. First generate an $R\times R$ matrix $\bK$ that has diagonal elements 1 and off-diagonal elements $c$, and compute the Cholesky factor $\bC$ of $\bK$. Then generate $n$ uniformly random $s \times R$ matrices, orthonormalize their columns using the QR decomposition, and multiply on the right with $C$. Then let $\mT_R$ be the canonical rank-$R$ tensor generated by these matrices as factor matrices. Two types of noise are added to $\mT_R$. Generate tensors $\mN_1$ and $\mN_2 \in \mathbb{R}^{s \times s \times s}$ with elements drawn from the standard normal distribution. An intermediate tensor $\hat{\mT}$ is generated as $\hat{\mT}=\mT_R+(100/l_1-1)^{-1/2} \, \|\mT_R\|_F \, \mN_1 / \|\mN_1\|_F$, and finally $\mT$ is obtained as $\mT=\hat{\mT}+(100/l_2-1)^{-1/2} \, \| \hat{\mT}\|_F \, (\mN_2 * \hat{\mT}) 
/ \| \mN_2 * \hat{\mT}  \|_F$, where * denotes element-wise multiplication.\\
  
We perform a series of tests computing a rank-$R$ CP decomposition of the tensors $\mT$, for various values of $s, c, R, l_{1}$ and $l_{2}$. Note that in this paper we do not study so-called overfactoring effects \cite{TomasiPARAFAC,AcarCPOPT}, in which numerical methods may produce approximations that do not give physically relevant information when the CP-rank $R$ is chosen larger than some rank intrinsic to the data tensor. Our reason for not considering overfactoring is that in this paper we accelerate ALS by the N-GMRES optimization approach, and we have observed that we almost always converge to the same stationary point as ALS. Since the overfactoring properties of ALS have been studied extensively elsewhere \cite{TomasiPARAFAC,AcarCPOPT}, we do not consider them here.

\begin{table}[h!]
    \begin{center}
        \begin{tabular}{|r|l|r|r|r|r|r|r|}\hline
\multicolumn{2}{|l|}{$h^*$ accuracy $10^{-3}$} & \multicolumn{2}{|c|}{ALS} & \multicolumn{2}{|c|}{N-GMRES} & \multicolumn{2}{|c|}{N-CG}\\
 \hline
 \multicolumn{2}{|c|}{problem parameters}  & it & time & it & time & it & time \\
 \hline
1 & $s=$20, $c=$0.5, $R=$3, $l_{1}=$1, $l_{2}=$1 & 18 & {\bf 0.083} & 16 & 0.21 & 34 & 0.17 \\
2 & $s=$20, $c=$0.5, $R=$5, $l_{1}=$10, $l_{2}=$5 & 9 & {\bf 0.083} & 8 & 0.17 & 64 & 0.51 \\
 \hline
3 & $s=$20, $c=$0.9, $R=$3, $l_{1}=$0, $l_{2}=$0 & 186 & 0.8 & 153 & 1.7 & 137 & {\bf 0.57} \\
4 & $s=$20, $c=$0.9, $R=$5, $l_{1}=$1, $l_{2}=$1 & 19 & {\bf 0.15} & 13 & 0.34 & 195 & 1.4 \\
 \hline
5 & $s=$50, $c=$0.5, $R=$3, $l_{1}=$1, $l_{2}=$1 & 11 & {\bf 0.089} & 8 & 0.21 & 38 & 0.46 \\
6 & $s=$50, $c=$0.5, $R=$5, $l_{1}=$10, $l_{2}=$5 & 10 & {\bf 0.15} & 9 & 0.3 & 50 & 0.97 \\
 \hline
7 & $s=$50, $c=$0.9, $R=$3, $l_{1}=$0, $l_{2}=$0 & 314 & 2.2 & 56 & {\bf 1.6} & 200 & 1.8 \\
8 & $s=$50, $c=$0.9, $R=$5, $l_{1}=$1, $l_{2}=$1 & 15 & {\bf 0.2} & 10 & 0.43 & $>$1821 & $>$32 \\
 \hline
9 & $s=$100, $c=$0.5, $R=$3, $l_{1}=$1, $l_{2}=$1 & 9 & {\bf 0.31} & 9 & 1.1 & 71 & 5.7 \\
10 & $s=$100, $c=$0.5, $R=$5, $l_{1}=$10, $l_{2}=$5 & 15 & {\bf 0.68} & 13 & 2.2 & 66 & 7.5 \\
 \hline
11 & $s=$100, $c=$0.9, $R=$3, $l_{1}=$0, $l_{2}=$0 & 178 & 5.9 & 30 & {\bf 3.9} & 340 & 23 \\
12 & $s=$100, $c=$0.9, $R=$5, $l_{1}=$1, $l_{2}=$1 & 12 & {\bf 0.52} & 9 & 1.7 & 260 & 24 \\
 \hline
        \end{tabular}
    \end{center}
    \caption{Test Problem I. Number of iterations and time (in seconds) until accuracy measure $|h(\mA_R^{(i)})-h^*|$ is reduced to $10^{-3}$. Here, $h^*$ is the value of the normalized distance measure, (\ref{eq:hA}), in the stationary point the methods converge to. The smallest times appear in bold. ALS is the fastest for most of these low-accuracy tests.}
    \label{tab:dense3}
\end{table}

Before performing tests for a wide range of parameter values $s, c, R, l_{1}$ and $l_{2}$, we look at a specific case and study the choice of N-GMRES window size $w$. Fig.~\ref{fig:denseDetTiming} shows convergence results for an instance of Test Problem I with parameters $s=$50, $c=$0.9, $R=$3, $l_{1}=$1 and $l_{2}=$1. This constitutes a `difficult' case with $c=0.9$, for which ALS converges rather slowly. It is the same case as in Figs.~\ref{fig:introHard}-\ref{fig:introHardTiming}. Fig.~\ref{fig:denseDetTiming} shows the effect of the window size $w$ on convergence speed. Several observations can be made. The choice $w=1$ does not converge much faster than ALS, but $w=2$ already leads to significant gains when high accuracy is desired, and $w=3$ already performs reasonably similar to the best choice. (Note again that these convergence plots take into account the extra costs of the N-GMRES optimization and line search in each iteration.) The optimal performance appears to occur, for this test problem, when $w\approx20$ is chosen. Fast convergence of $h(\mA_R^{(i)})$ to the optimal value $h^*$ (see (\ref{eq:hA})) is already fast obtained with high accuracy for $w\approx10$, but Fig.~\ref{fig:denseDetTiming}(b) shows that $w\approx20$ leads to faster convergence to a stationary point with high accuracy. Convergence plots as a function of iteration (not shown due to space constraints) show the same pattern, indicating that speed differences are almost entirely due to differing iteration counts, and not to differences in the amount of work to build and solve normal equation system (\ref{eq:normal}) for varying window size. We use window size $w=20$ in the numerical tests below.

\begin{table}[h!]
    \begin{center}
        \begin{tabular}{|r|l|r|r|r|r|r|r|}\hline
\multicolumn{2}{|l|}{$h^*$ accuracy $10^{-6}$} & \multicolumn{2}{|c|}{ALS} & \multicolumn{2}{|c|}{N-GMRES} & \multicolumn{2}{|c|}{N-CG}\\
 \hline
 \multicolumn{2}{|c|}{problem parameters}  & it & time & it & time & it & time \\
 \hline
1 & $s=$20, $c=$0.5, $R=$3, $l_{1}=$1, $l_{2}=$1 & 25 & {\bf 0.11} & 19 & 0.25 & 42 & 0.2 \\
2 & $s=$20, $c=$0.5, $R=$5, $l_{1}=$10, $l_{2}=$5 & 21 & {\bf 0.17} & 13 & 0.28 & 77 & 0.58 \\
 \hline
3 & $s=$20, $c=$0.9, $R=$3, $l_{1}=$0, $l_{2}=$0 & 949 & 4.1 & 167 & 1.9 & 197 & {\bf 0.79} \\
4 & $s=$20, $c=$0.9, $R=$5, $l_{1}=$1, $l_{2}=$1 & 582 & 4.2 & 109 & {\bf 3.6} & 614 & 3.9 \\
 \hline
5 & $s=$50, $c=$0.5, $R=$3, $l_{1}=$1, $l_{2}=$1 & 20 & {\bf 0.15} & 11 & 0.29 & 50 & 0.56 \\
6 & $s=$50, $c=$0.5, $R=$5, $l_{1}=$10, $l_{2}=$5 & 20 & {\bf 0.26} & 14 & 0.53 & 67 & 1.3 \\
 \hline
7 & $s=$50, $c=$0.9, $R=$3, $l_{1}=$0, $l_{2}=$0 & 1134 & 8 & 77 & {\bf 2.4} & 454 & 3.9 \\
8 & $s=$50, $c=$0.9, $R=$5, $l_{1}=$1, $l_{2}=$1 & 518 & {\bf 5.9} & 154 & 9.2 & $>$1821 & $>$32 \\
 \hline
9 & $s=$100, $c=$0.5, $R=$3, $l_{1}=$1, $l_{2}=$1 & 19 & {\bf 0.64} & 12 & 1.4 & 98 & 7.3 \\
10 & $s=$100, $c=$0.5, $R=$5, $l_{1}=$10, $l_{2}=$5 & 27 & {\bf 1.2} & 16 & 2.9 & 115 & 11 \\
 \hline
11 & $s=$100, $c=$0.9, $R=$3, $l_{1}=$0, $l_{2}=$0 & $>$800 & $>$27 & 69 & {\bf 8.4} & 673 & 44 \\
12 & $s=$100, $c=$0.9, $R=$5, $l_{1}=$1, $l_{2}=$1 & 457 & 19 & 85 & {\bf 19} & 620 & 52 \\
 \hline
        \end{tabular}
    \end{center}
    \caption{Test Problem I. Number of iterations and time (in seconds) until accuracy measure $|h(\mA_R^{(i)})-h^*|$ is reduced to $10^{-6}$. The smallest times appear in bold. For these medium-accuracy tests, ALS is still fastest for all `easy' cases ($c=0.5$), but N-GMRES or N-CG are faster for most of the `difficult' cases ($c=0.9$).}
    \label{tab:dense6}
\end{table}

Tables \ref{tab:dense3}-\ref{tab:dense10} show convergence results for a series of tests with a wide range of parameter values $s, c, R, l_{1}$ and $l_{2}$. Table \ref{tab:dense3} compares the number of iterations and times (in seconds) that the ALS, N-GMRES and N-CG (from \cite{AcarCPOPT}) methods need to reduce the accuracy measure $|h(\mA_R^{(i)})-h^*|$ to $10^{-3}$, where $h^*$ is the value of the normalized distance measure, (\ref{eq:hA}), in the stationary point the method converges to. All methods start from the same random initial guess, and converge to the same stationary point in these tests. This table compares the methods for a situation where low-accuracy results are considered sufficient. The smallest times appear in bold. Table \ref{tab:dense3} shows that ALS is normally the fastest when low accuracy is sufficient. This is consistent with what many others have observed for ALS-type algorithms, for CP decomposition and other problems, see, for example, 
\cite{TomasiPARAFAC,AcarCPOPT}. Note that the relative performance of the methods may be somewhat dependent on the initial guess, but this effect is partially averaged out by considering multiple tests with different parameters, and the results in the table give a representative overview of the relative performance of the methods.

\begin{table}[h!]
    \begin{center}
        \begin{tabular}{|r|l|r|r|r|r|r|r|}\hline
\multicolumn{2}{|l|}{$h^*$ accuracy $10^{-10}$} & \multicolumn{2}{|c|}{ALS} & \multicolumn{2}{|c|}{N-GMRES} & \multicolumn{2}{|c|}{N-CG}\\
 \hline
 \multicolumn{2}{|c|}{problem parameters}  & it & time & it & time & it & time \\
 \hline
1 & $s=$20, $c=$0.5, $R=$3, $l_{1}=$1, $l_{2}=$1 & 37 & {\bf 0.16} & 22 & 0.3 & 52 & 0.24 \\
2 & $s=$20, $c=$0.5, $R=$5, $l_{1}=$10, $l_{2}=$5 & 37 & {\bf 0.28} & 17 & 0.39 & 97 & 0.7 \\
 \hline
3 & $s=$20, $c=$0.9, $R=$3, $l_{1}=$0, $l_{2}=$0 & $>$1600 & $>$6.9 & 189 & {\bf 2.4} & $>$400 & $>$6.1 \\
4 & $s=$20, $c=$0.9, $R=$5, $l_{1}=$1, $l_{2}=$1 & $>$1200 & $>$8.6 & 139 & {\bf 4.5} & 1100 & 6.8 \\
 \hline
5 & $s=$50, $c=$0.5, $R=$3, $l_{1}=$1, $l_{2}=$1 & 32 & {\bf 0.23} & 16 & 0.42 & 67 & 0.69 \\
6 & $s=$50, $c=$0.5, $R=$5, $l_{1}=$10, $l_{2}=$5 & 36 & {\bf 0.44} & 17 & 0.67 & 89 & 1.6 \\
 \hline
7 & $s=$50, $c=$0.9, $R=$3, $l_{1}=$0, $l_{2}=$0 & $>$1200 & $>$8.5 & 104 & {\bf 3.5} & $>$553 & $>$7.6 \\
8 & $s=$50, $c=$0.9, $R=$5, $l_{1}=$1, $l_{2}=$1 & 1252 & 14 & 171 & {\bf 10} & $>$1821 & $>$32 \\
 \hline
9 & $s=$100, $c=$0.5, $R=$3, $l_{1}=$1, $l_{2}=$1 & 31 & {\bf 1} & 16 & 2 & 136 & 9.6 \\
10 & $s=$100, $c=$0.5, $R=$5, $l_{1}=$10, $l_{2}=$5 & 42 & {\bf 1.8} & 22 & 4.1 & 178 & 16 \\
 \hline
11 & $s=$100, $c=$0.9, $R=$3, $l_{1}=$0, $l_{2}=$0 & $>$800 & $>$27 & 99 & {\bf 17} & $>$748 & $>$60 \\
12 & $s=$100, $c=$0.9, $R=$5, $l_{1}=$1, $l_{2}=$1 & 1218 & 51 & 112 & {\bf 26} & 880 & 72 \\
 \hline
        \end{tabular}
    \end{center}
    \caption{Test Problem I. Number of iterations and time (in seconds) until accuracy measure $|h(\mA_R^{(i)})-h^*|$ is reduced to $10^{-10}$. The smallest times appear in bold. For these high-accuracy tests, ALS is still fastest for all `easy' cases ($c=0.5$), but N-GMRES is substantially faster for all the `difficult' cases ($c=0.9$).}
    \label{tab:dense10}
\end{table}
\begin{figure}[!htbp]
  \centering
  \scalebox{0.35}{
  \includegraphics{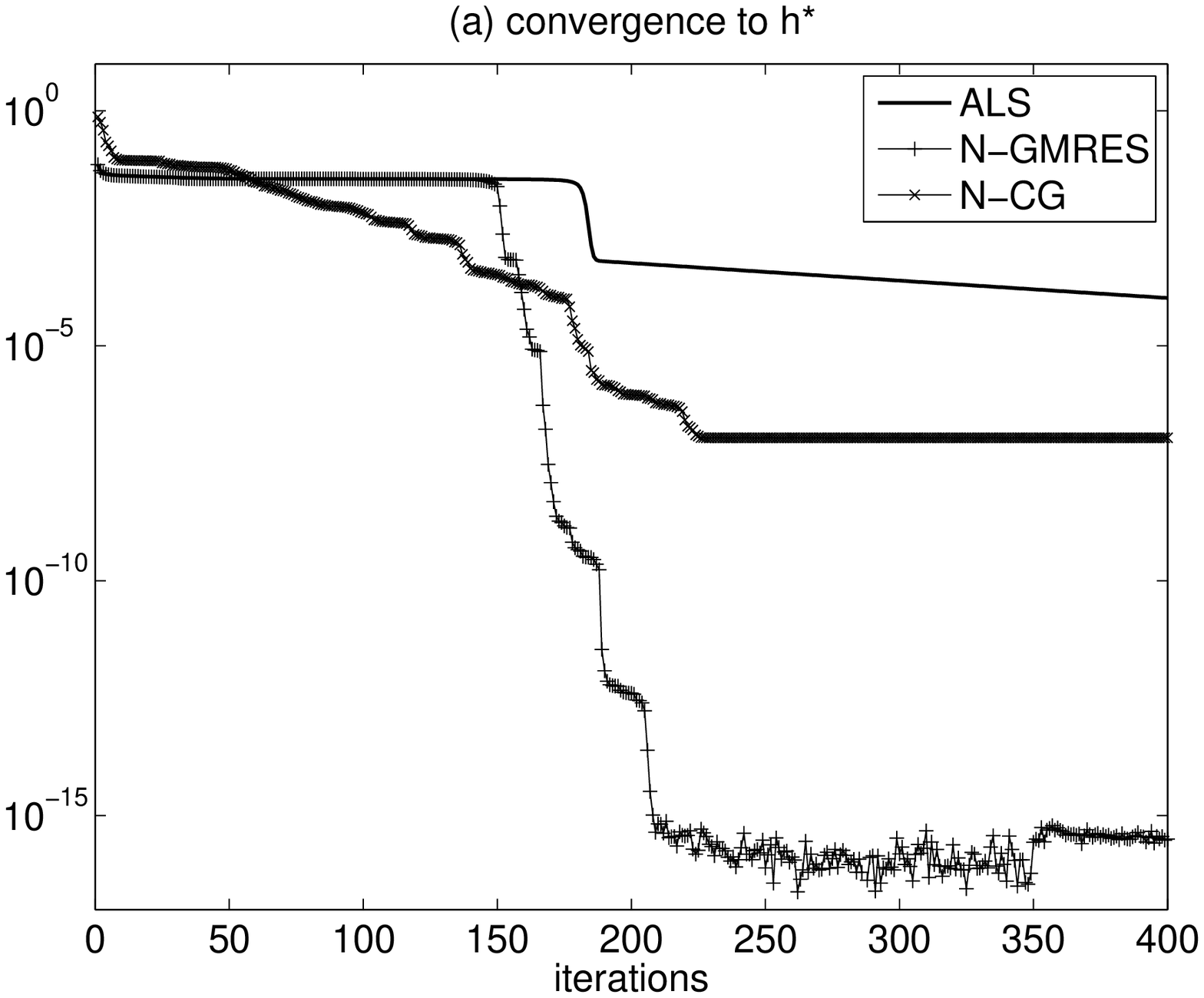}
  \includegraphics{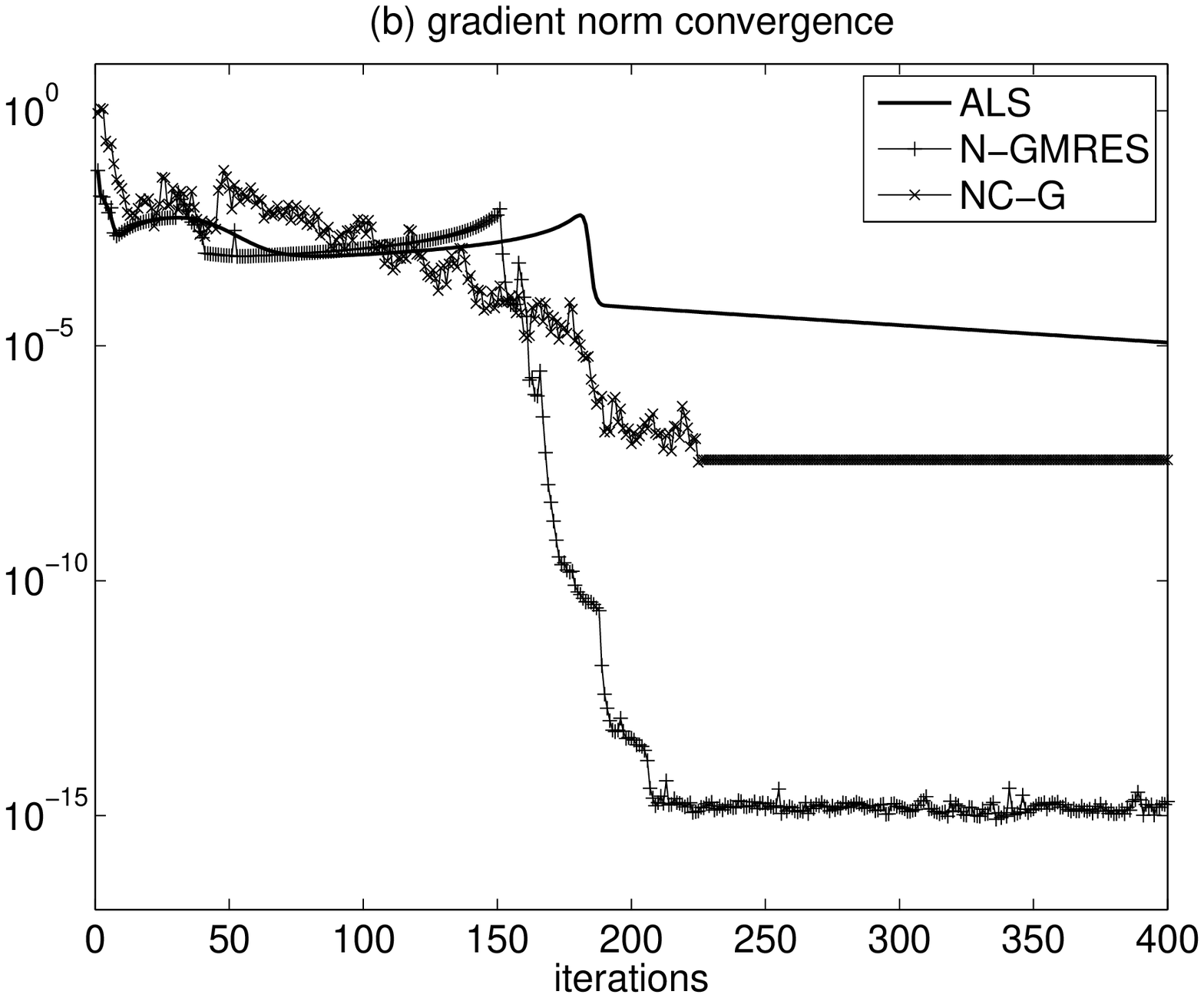}
  }
  \scalebox{0.35}{
  \includegraphics{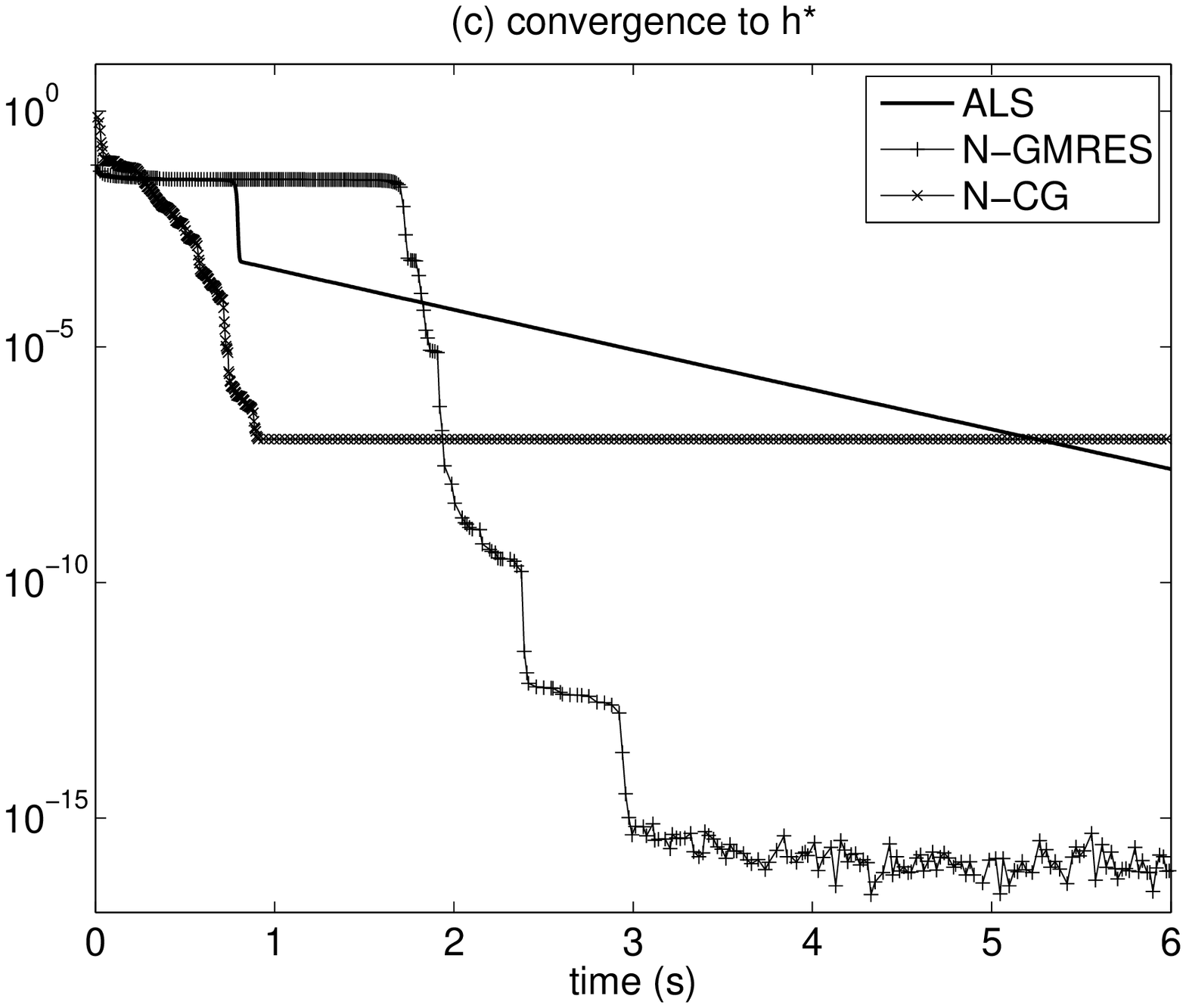}
  \includegraphics{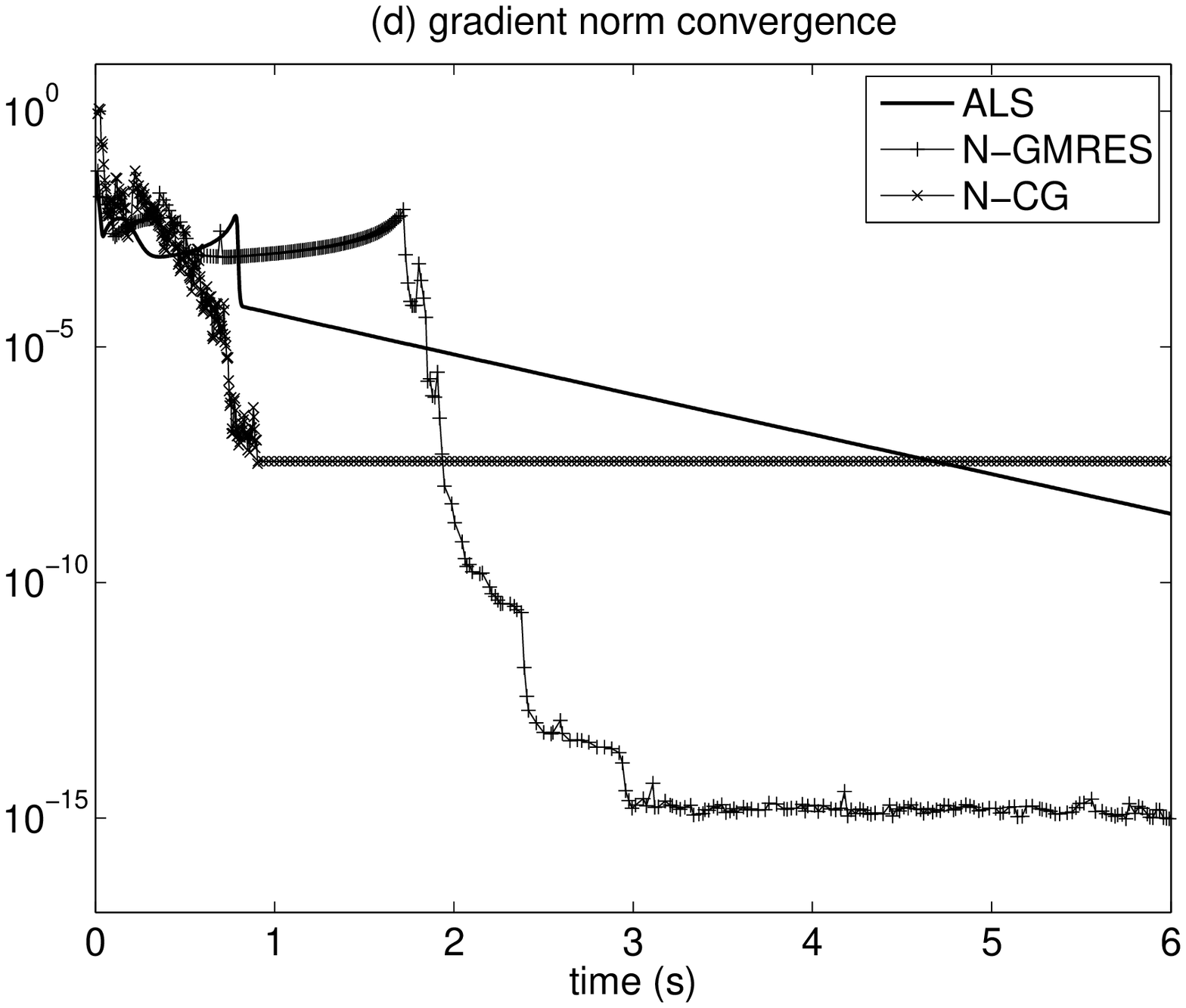}
  }
   \caption{Test Problem I. Convergence plots for case 3 from Tables  \ref{tab:dense3}-\ref{tab:dense10}. Panel (b) shows that N-GMRES convergence kicks in only when ALS has gotten over a `bump' in the gradient size. N-CG performs better in this case.}
   \label{fig:denseCase3}
\end{figure}    

Tables \ref{tab:dense6} and \ref{tab:dense10} show convergence results when higher accuracy is required, with $|h(\mA_R^{(i)})-h^*|<10^{-6}$ and $|h(\mA_R^{(i)})-h^*|<10^{-10}$, respectively. Table \ref{tab:dense6}, for medium accuracy $10^{-6}$, shows that ALS is (not unexpectedly) still fastest for all `easy' cases ($c=0.5$), but N-GMRES or N-CG are faster for most of the `difficult' cases ($c=0.9$). Table \ref{tab:dense10}, for high accuracy $10^{-10}$, confirms that ALS is still fastest for all `easy' cases ($c=0.5$), but N-GMRES is substantially faster for all `difficult' cases ($c=0.9$). Note also that N-GMRES is faster than N-CG for all of the `difficult' cases, sometimes substantially.

It is instructive to consider convergence histories in some more detail for some of the test problems in Tables  \ref{tab:dense3}-\ref{tab:dense10}. Fig.~\ref{fig:denseCase3} shows convergence plots for case 3 from Tables \ref{tab:dense3}-\ref{tab:dense10}.
For this problem (and the random initial guess used), convergence of N-GMRES only sets in after more than 150 iterations. Fig.~\ref{fig:denseCase3}(b) gives some indication as to why this is the case. N-GMRES accelerates ALS, but ALS needs to get over a `bump' in the size of the gradient before it gets close enough to a stationary point for the N-GMRES acceleration to become effective. This points out a fundamental property of the N-GMRES acceleration procedure that is the topic of this paper: N-GMRES is not expected to offer a systematic improvement in the global convergence properties of the `preconditoning' process it accelerates. The only potential help with global convergence would come from the line search in Step III of the algorithm, but this will only be effective when the search direction jointly determined by ALS and N-GMRES is suitable, and, as we argued before, we can only expect this to be the case consistently when the process approaches a stationary point. So N-GMRES still mainly relies on the preconditioning process to guide convergence on the global scale, and kicks in when close enough to a stationary point. In the case of Fig.~\ref{fig:denseCase3}, N-CG appears to perform better in getting close to a stationary point. Indeed, N-CG works in a way that is fundamentally different from N-GMRES: N-CG does not accelerate an underlying one-step method, but follows its own strategy for exploring the global search space, with globalization provided by its line search. For the computation in Fig.~\ref{fig:denseCase3}, this leads to faster N-CG convergence.

However, we have observed that convergence behavior like in Fig.~\ref{fig:denseCase3}, while it may occur for some problem parameters and initial conditions, is not typical for Test Problem I with $c=0.9$. Rather, convergence behavior as in Fig.~\ref{fig:denseCase11}, in which N-GMRES converges kicks in sooner, is more common (this is also indicated by the timing results in Tables \ref{tab:dense3}-\ref{tab:dense10}). Fig.~\ref{fig:denseCase11} shows convergence plots for case 11 from Tables \ref{tab:dense3}-\ref{tab:dense10}. In this case, N-GMRES convergence kicks in early, and N-CG's strategy for global convergence does not appear successful. For high accuracy, N-GMRES is much faster in this case than both ALS and N-CG, and this is generally the case (for the `difficult' problems with $c=0.9$), as confirmed by the numbers in Table \ref{tab:dense10}. We recall that ALS convergence plots of an `easy' case with $c=0.5$ are given in Fig.~\ref{fig:introEasy}.

\begin{figure}[!htbp]
  \centering
  \scalebox{0.35}{
  \includegraphics{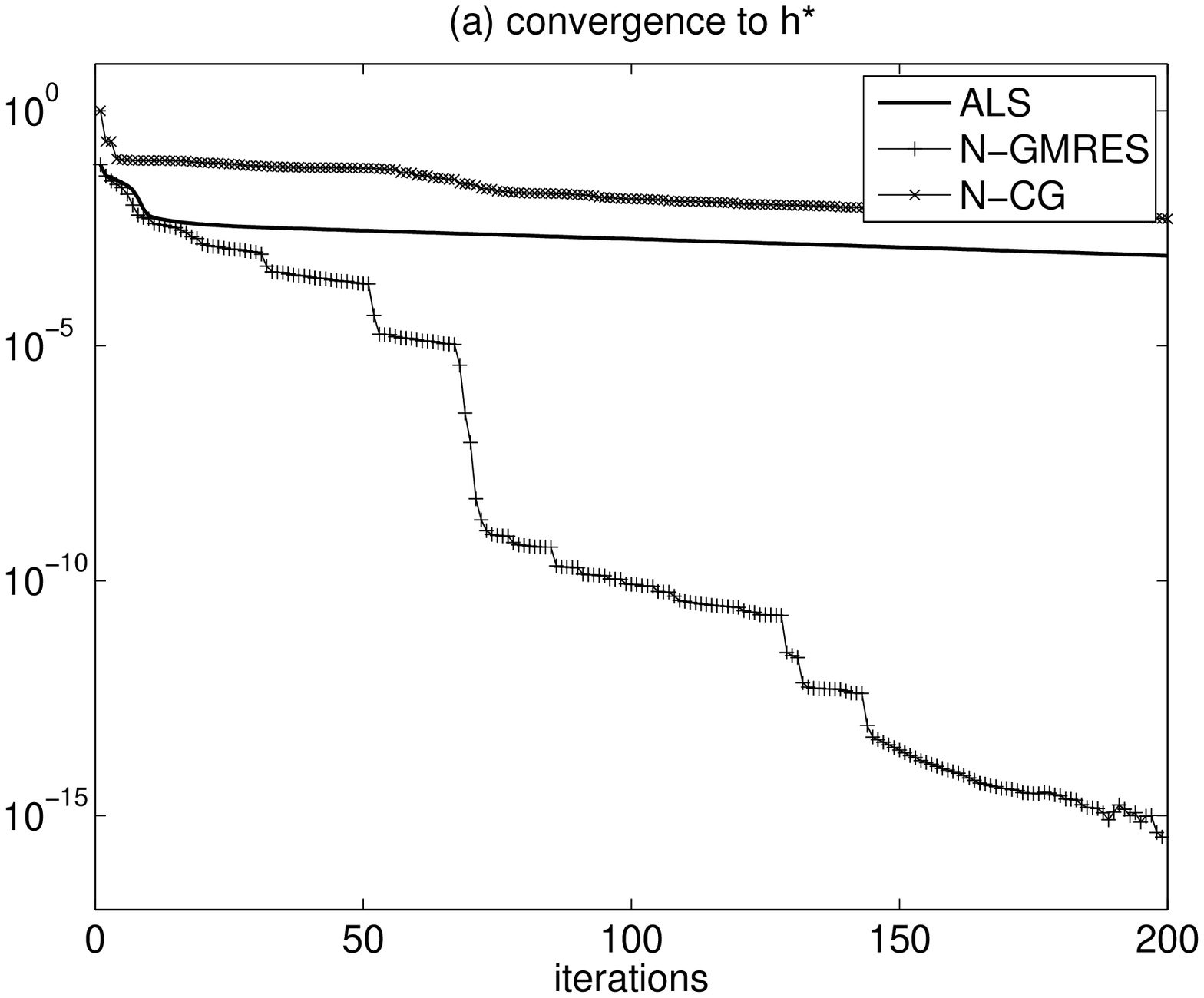}
  \includegraphics{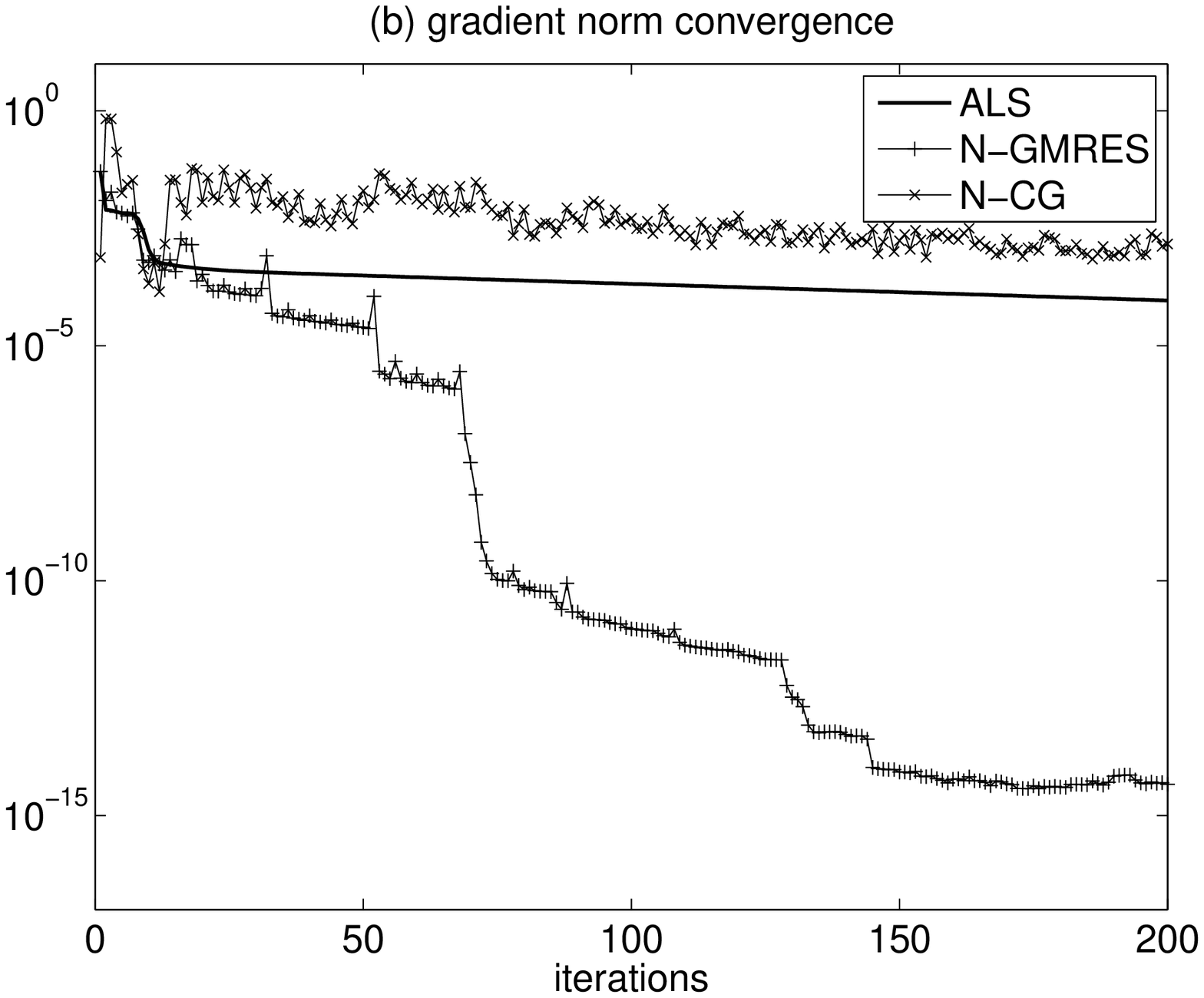}
  }
  \scalebox{0.35}{
  \includegraphics{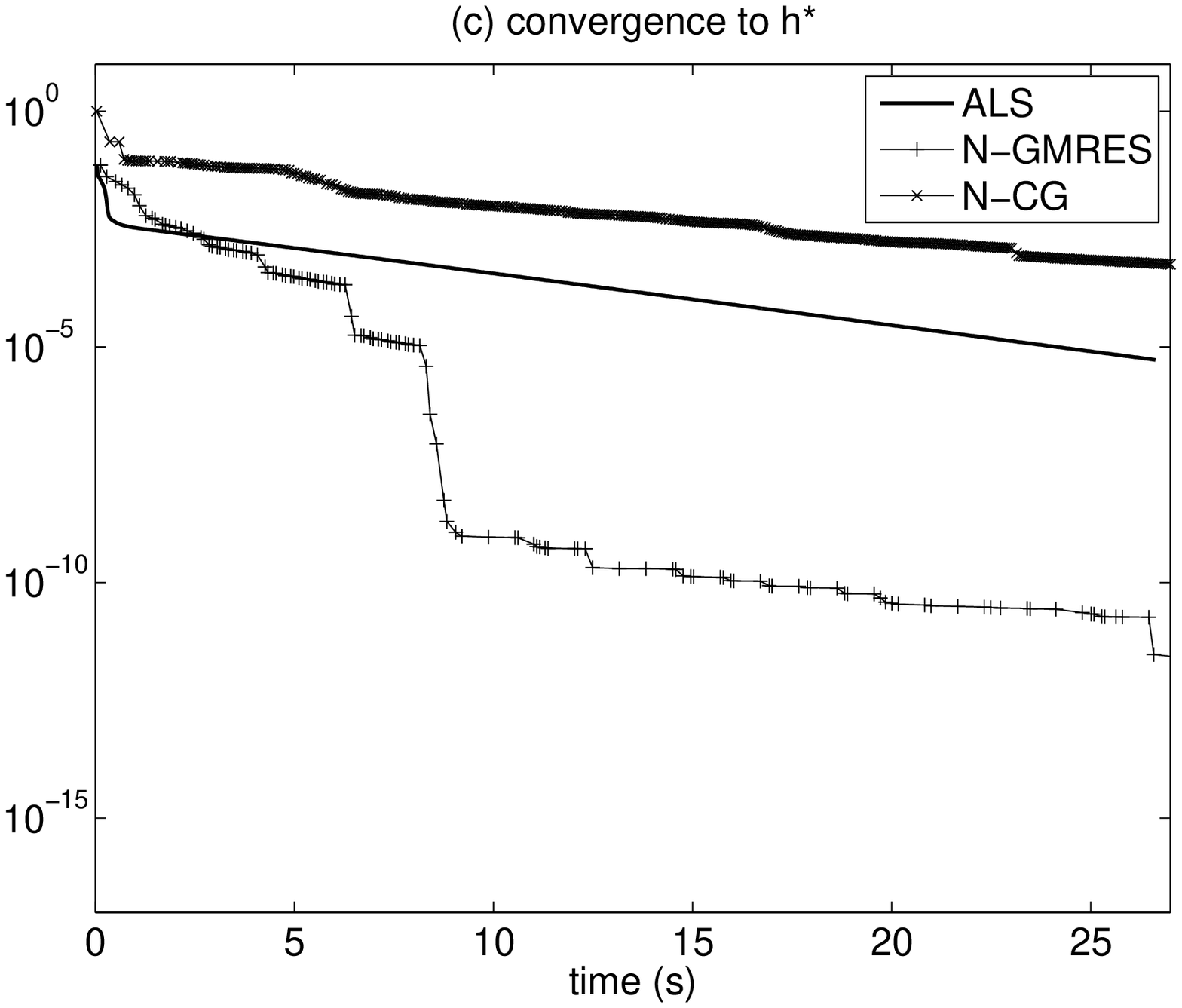}
  \includegraphics{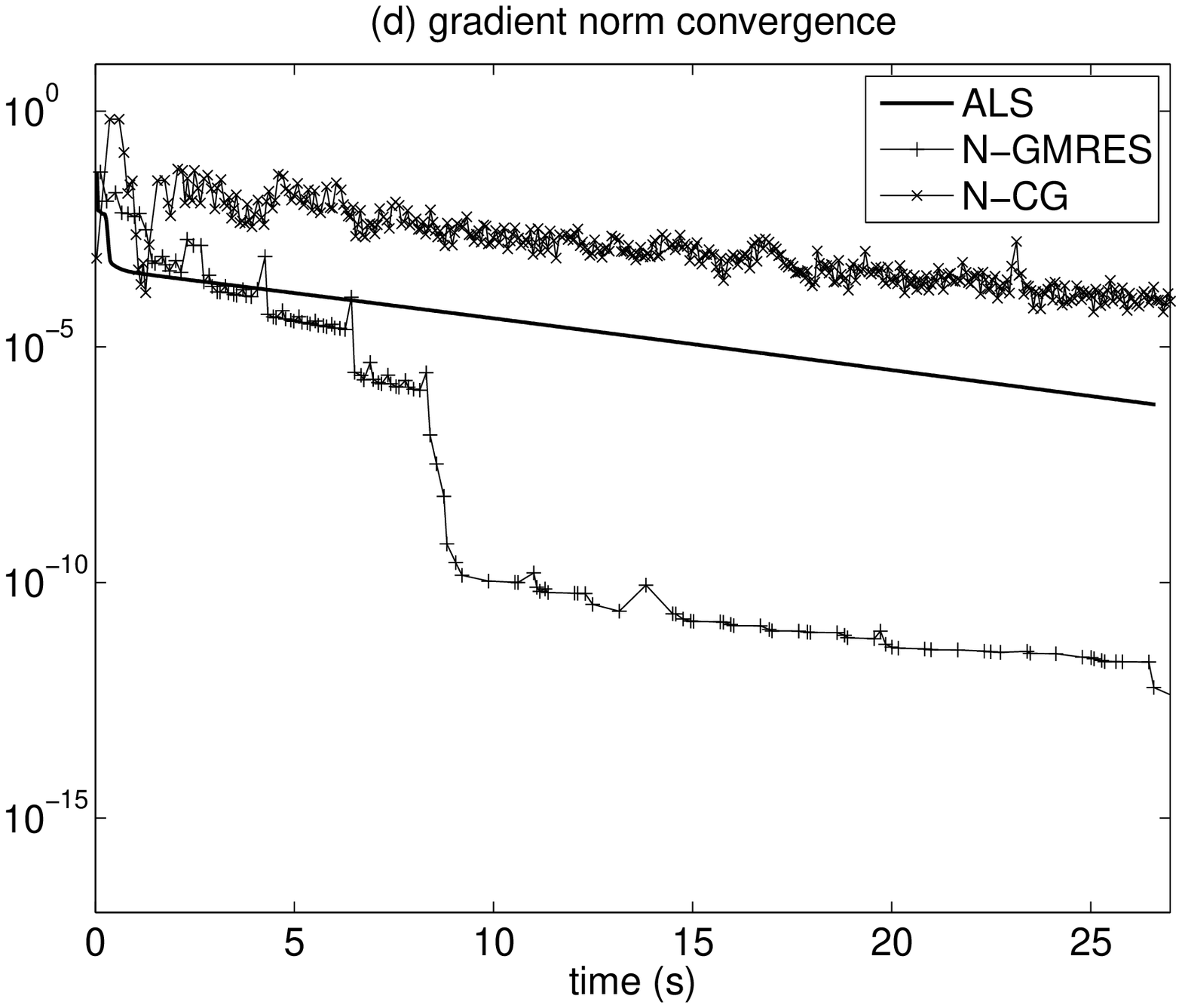}
  }
   \caption{Test Problem I. Convergence plots for case 11 from Tables \ref{tab:dense3}-\ref{tab:dense10}. N-GMRES convergence kicks in early, which leads to fast convergence, as is the case for most tests in Table \ref{tab:dense10} with $c=0.9$.}
   \label{fig:denseCase11}
\end{figure}    

Fig.~\ref{fig:denseCase3} shows another conspicuous difference in convergence behavior between N-GMRES and N-CG. 
N-CG convergence to a stationary point as measured by the size of the normalized gradient of the objective function (panel (b) in Fig.~\ref{fig:denseCase3}) stalls at a value of approximately $10^{-7}$. This is so because convergence of N-CG is limited by the accuracy of the line search, since it is the line search mechanism that ultimately determines the solution N-CG converges to. 
In principle this could be remedied by a more accurate line search, but this may incur extra cost, and we have found it difficult to significantly increase the accuracy of the Mor\'{e}-Thuente line search by changing its parameters. We have found that this gradient stalling occurs consistently for all problems we have considered; see also the other convergence plots in this paper.
In the test of Fig.~\ref{fig:denseCase3}, this convergence stalling also limits the accuracy of the stationary point to about $10^{-7}$, but this is not always the case (in some of the forthcoming plots N-CG converges to machine accuracy in terms of the $|h-h^*|$ measure).
It is interesting to note that N-GMRES does not appear to suffer from this stalling in the gradient norm convergence. The likely explanation of this goes as follows: the N-GMRES procedure is such that, close to a stationary point, the accelerated iterates provided by N-GMRES may converge efficiently to the stationary point, even without the help of the globalizing line search. This is so because the linearization of (\ref{eq:linearize}) is expected to be highly accurate close to a stationary point (for problems that are sufficiently smooth), and the accelerated iterate is highly accurate by itself. N-GMRES does thus not need to rely on the line search mechanism for accuracy once near a stationary point, in contrast to N-CG, which relies on the line search for accurate convergence. This difference points to a potential advantage of N-GMRES over N-CG.

\section{Numerical Results: Sparse Tensor Test Problem}
\label{sec:NumSparse}
\begin{figure}[!htbp]
  \centering
  \scalebox{0.35}{
  \includegraphics{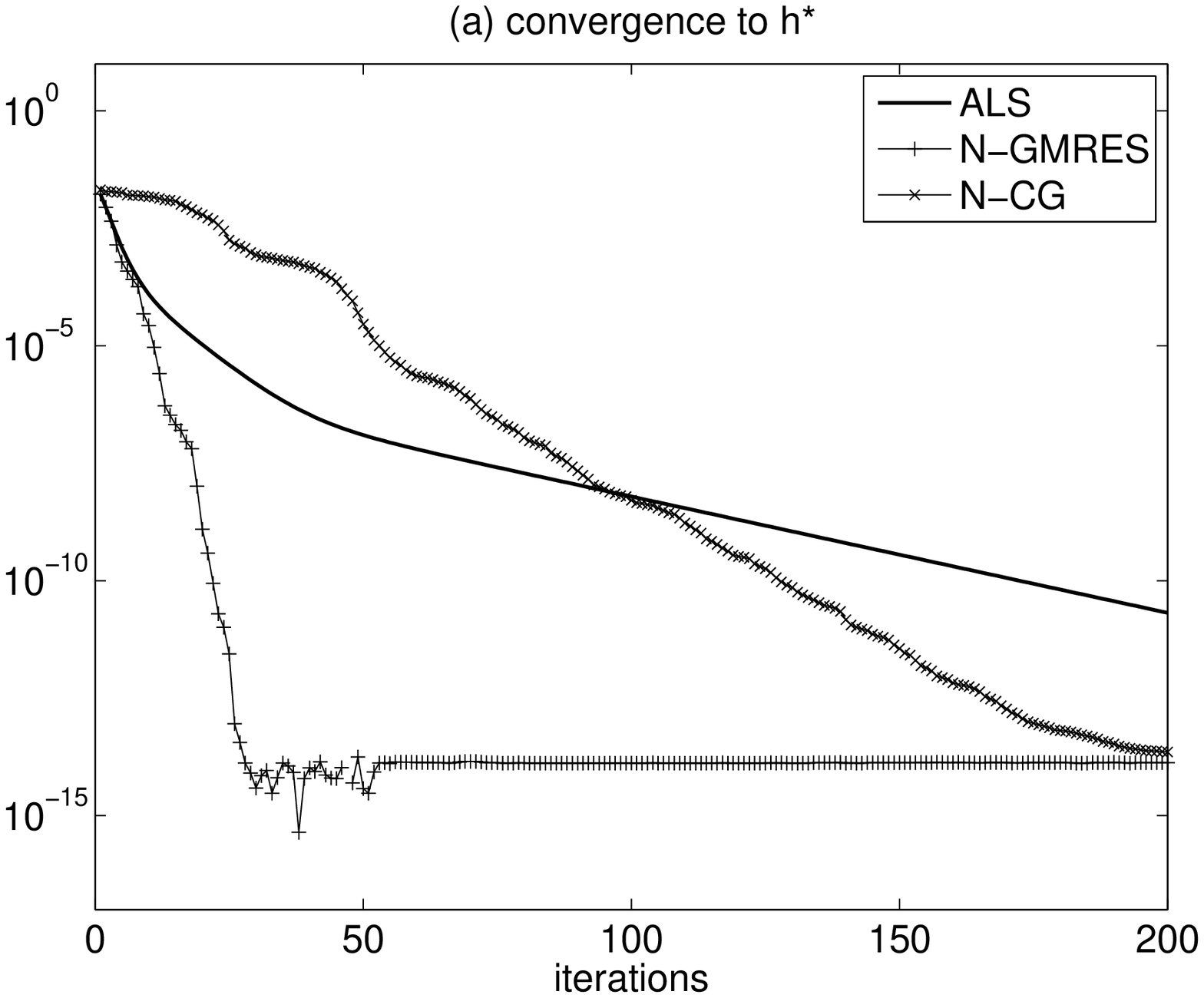}
  \includegraphics{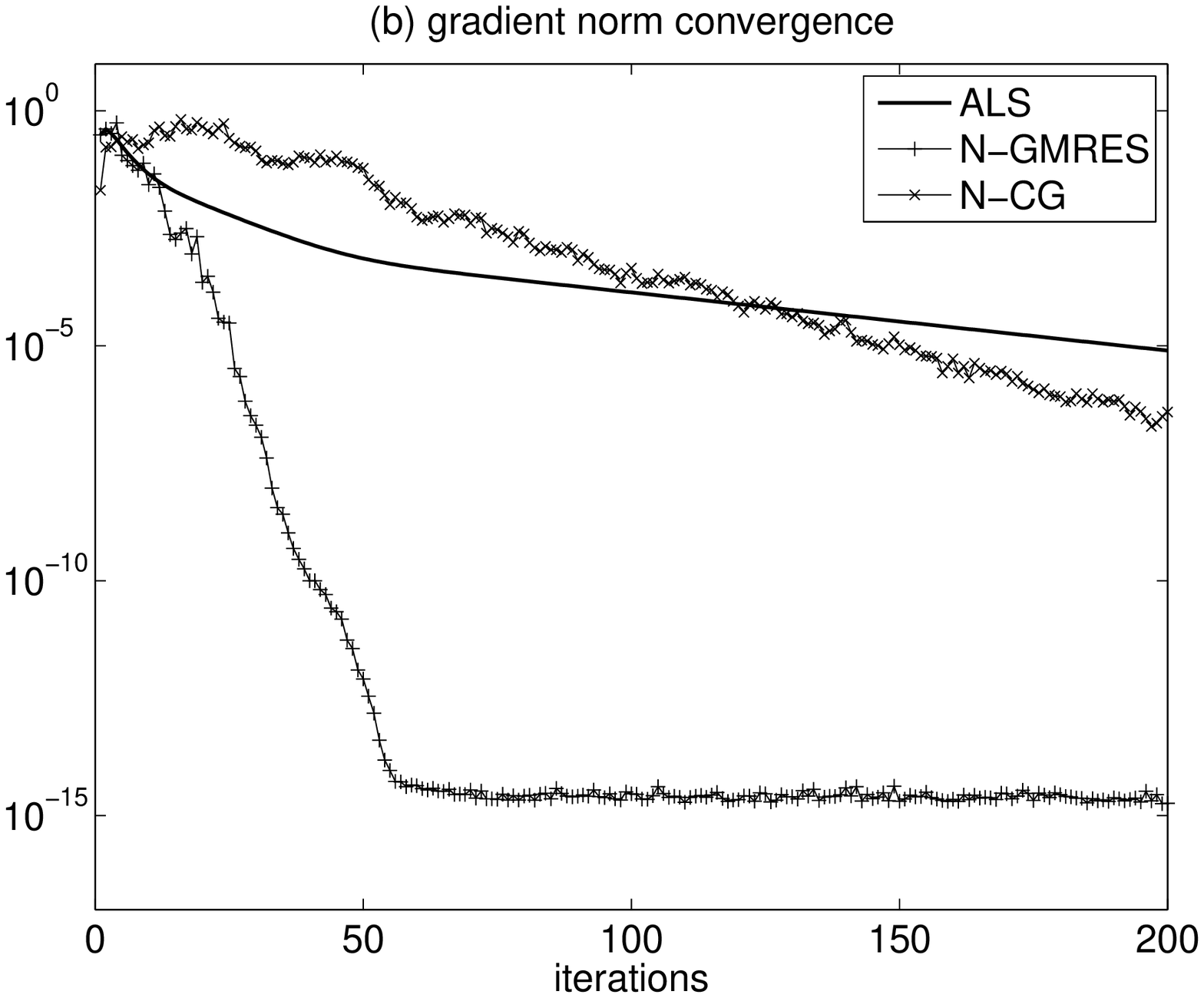}
  }
   \caption{Test Problem II. Parameters are $d=3$ (and thus $N=6$), $s=6$ and $R=3$, leading to a 6-way tensor of size $6 \times 6 \times 6 \times 6 \times 6 \times 6$ for which an $R=3$ CP approximation is sought. Convergence plots for ALS, N-CG, and N-GMRES with window size $w=20$.}
   \label{fig:sparseDetIterations}
\end{figure}    
\begin{figure}[!htbp]
  \centering
  \scalebox{0.5}{
  \includegraphics{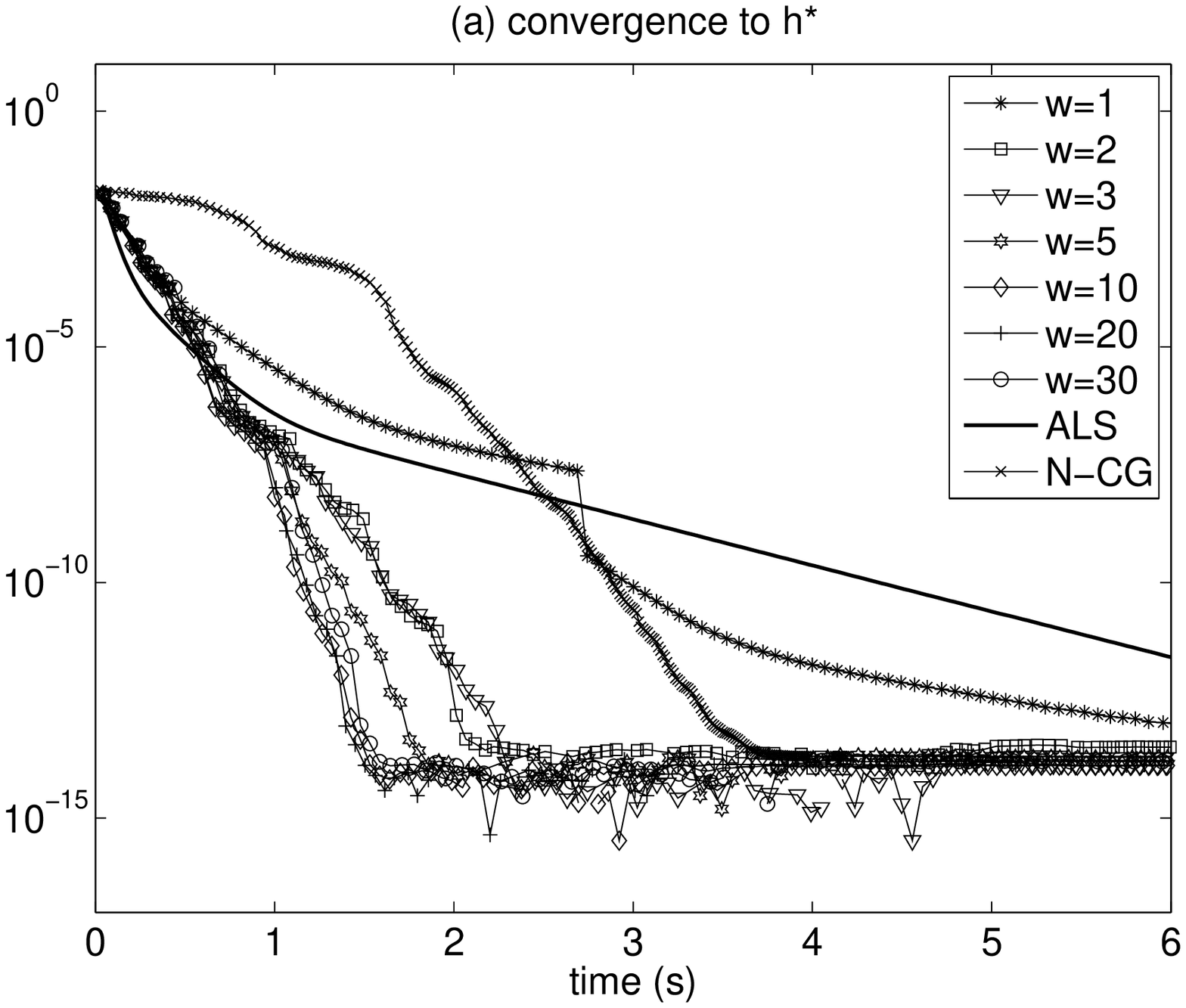}
  }
  \scalebox{0.5}{
  \includegraphics{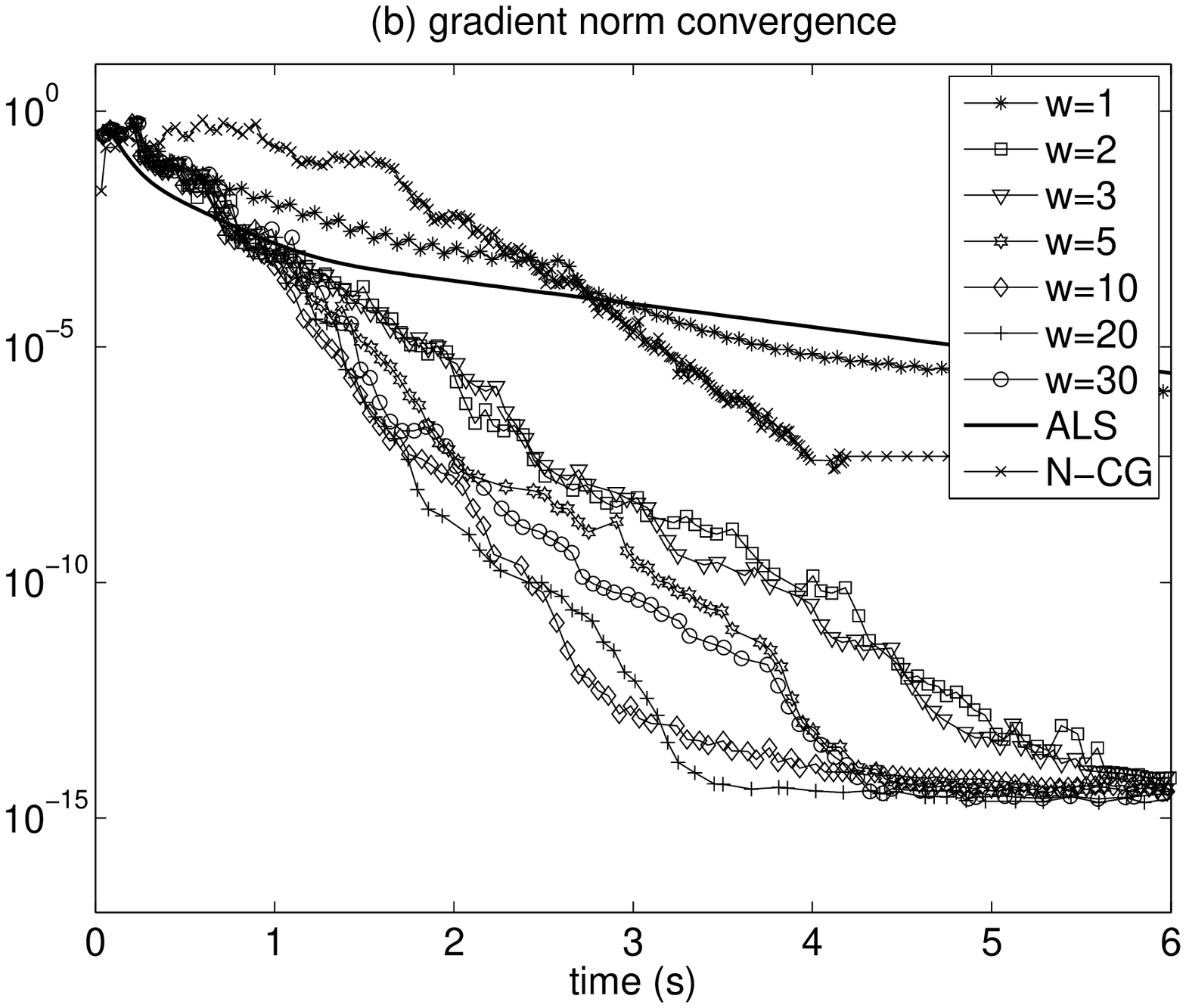}
  }
   \caption{Test Problem II ($N=6, s=6, R=3$).
Convergence plots as a function of the N-GMRES window size, $w$. Window size $w=20$ emerges as a good choice for fast convergence when high accuracy is required.}
   \label{fig:sparseDetTiming}
\end{figure}    
\begin{table}[h!]
    \begin{center}
        \begin{tabular}{|r|l|r|r|r|r|r|r|}\hline
\multicolumn{2}{|l|}{$h^*$ accuracy $10^{-3}$} & \multicolumn{2}{|c|}{ALS} & \multicolumn{2}{|c|}{N-GMRES} & \multicolumn{2}{|c|}{N-CG}\\
 \hline
\multicolumn{2}{|c|}{problem parameters}  & it & time & it & time & it & time \\
 \hline
1 & $N=$4, $s=$8, $R=$6 & 25 & {\bf 0.66} & 22 & 1.2 & 45 & 0.83 \\
2 & $N=$4, $s=$8, $R=$6 & 9 & {\bf 0.26} & 9 & 0.53 & 57 & 0.96 \\
3 & $N=$4, $s=$16, $R=$3 & 10 & {\bf 0.15} & 7 & 0.2 & 23 & 0.36 \\
4 & $N=$4, $s=$16, $R=$3 & 12 & {\bf 0.19} & 9 & 0.27 & 18 & 0.3 \\
 \hline
5 & $N=$6, $s=$4, $R=$2 & 12 & {\bf 0.22} & 9 & 0.29 & 30 & 0.53 \\
6 & $N=$6, $s=$4, $R=$2 & 9 & {\bf 0.17} & 7 & 0.25 & 325 & 2.7 \\
7 & $N=$6, $s=$8, $R=$5 & 9 & {\bf 0.37} & 9 & 1.1 & 100 & 24 \\
8 & $N=$6, $s=$8, $R=$5 & 8 & {\bf 0.37} & 8 & 1.6 & 86 & 22 \\
 \hline
9 & $N=$8, $s=$4, $R=$2 & 11 & {\bf 0.34} & 8 & 0.53 & 52 & 3.1 \\
10 & $N=$8, $s=$4, $R=$2 & 14 & {\bf 0.42} & 9 & 0.61 & 120 & 8.7 \\
 \hline
        \end{tabular}
    \end{center}
    \caption{Test Problem II. Number of iterations and time (in seconds) until accuracy measure $|h(\mA_R^{(i)})-h^*|$ is reduced to $10^{-3}$. The smallest times appear in bold. ALS is fastest for these low-accuracy tests.}
    \label{tab:sparse3}
\end{table}
\begin{table}[h!]
    \begin{center}
        \begin{tabular}{|r|l|r|r|r|r|r|r|}\hline
\multicolumn{2}{|l|}{$h^*$ accuracy $10^{-6}$} & \multicolumn{2}{|c|}{ALS} & \multicolumn{2}{|c|}{N-GMRES} & \multicolumn{2}{|c|}{N-CG}\\
 \hline
\multicolumn{2}{|c|}{problem parameters}  & it & time & it & time & it & time \\
 \hline
1 & $N=$4, $s=$8, $R=$6 & 182 & 4.4 & 44 & 2.5 & 91 & {\bf 1.4} \\
2 & $N=$4, $s=$8, $R=$6 & 67 & 1.6 & 20 & {\bf 1.2} & 163 & 2.2 \\
3 & $N=$4, $s=$16, $R=$3 & 410 & 5.9 & 94 & 3 & 103 & {\bf 1.3} \\
4 & $N=$4, $s=$16, $R=$3 & 203 & 3 & 53 & 1.7 & 124 & {\bf 1.4} \\
 \hline
5 & $N=$6, $s=$4, $R=$2 & 29 & 0.53 & 13 & {\bf 0.44} & 75 & 1 \\
6 & $N=$6, $s=$4, $R=$2 & 27 & 0.51 & 12 & {\bf 0.45} & 351 & 3 \\
7 & $N=$6, $s=$8, $R=$5 & 212 & {\bf 8.3} & 63 & 14 & 138 & 32 \\
8 & $N=$6, $s=$8, $R=$5 & 55 & {\bf 2.2} & 23 & 5.2 & 147 & 34 \\
 \hline
9 & $N=$8, $s=$4, $R=$2 & 38 & 1.1 & 15 & {\bf 1.1} & 75 & 4.3 \\
10 & $N=$8, $s=$4, $R=$2 & 42 & {\bf 1.3} & 19 & 1.4 & $>$280 & $>$19 \\
 \hline
        \end{tabular}
    \end{center}
    \caption{Test Problem II. Number of iterations and time (in seconds) until accuracy measure $|h(\mA_R^{(i)})-h^*|$ is reduced to $10^{-6}$. The smallest times appear in bold. N-GMRES and N-CG are competitive with ALS for these medium-accuracy tests.}
    \label{tab:sparse6}
\end{table}

\begin{table}[h!]
    \begin{center}
        \begin{tabular}{|r|l|r|r|r|r|r|r|}\hline
\multicolumn{2}{|l|}{$h^*$ accuracy $10^{-10}$} & \multicolumn{2}{|c|}{ALS} & \multicolumn{2}{|c|}{N-GMRES} & \multicolumn{2}{|c|}{N-CG}\\
 \hline
\multicolumn{2}{|c|}{problem parameters}  & it & time & it & time & it & time \\
 \hline
1 & $N=$4, $s=$8, $R=$6 & $>$400 & $>$9.6 & 55 & {\bf 3.1} & 380 & 3.7 \\
2 & $N=$4, $s=$8, $R=$6 & 242 & 5.8 & 26 & {\bf 1.5} & 327 & 3.5 \\
3 & $N=$4, $s=$16, $R=$3 & $>$800 & $>$12 & 119 & 3.8 & 419 & {\bf 3.5} \\
4 & $N=$4, $s=$16, $R=$3 & 724 & 11 & 84 & {\bf 2.7} & 375 & 3.2 \\
 \hline
5 & $N=$6, $s=$4, $R=$2 & 52 & 0.94 & 19 & {\bf 0.65} & 153 & 1.6 \\
6 & $N=$6, $s=$4, $R=$2 & 51 & 0.95 & 18 & {\bf 0.67} & 386 & 3.3 \\
7 & $N=$6, $s=$8, $R=$5 & 613 & 24 & 81 & {\bf 18} & 213 & 40 \\
8 & $N=$6, $s=$8, $R=$5 & 127 & {\bf 5.1} & 31 & 6.8 & 262 & 46 \\
 \hline
9 & $N=$8, $s=$4, $R=$2 & 70 & 2 & 21 & {\bf 1.5} & 111 & 5.2 \\
10 & $N=$8, $s=$4, $R=$2 & 72 & 2.1 & 24 & {\bf 1.8} & $>$280 & $>$19 \\
 \hline
        \end{tabular}
    \end{center}
    \caption{Test Problem II. Number of iterations and time (in seconds) until accuracy measure $|h(\mA_R^{(i)})-h^*|$ is reduced to $10^{-10}$. The smallest times appear in bold. N-GMRES outperforms ALS for most of these high-accuracy tests, and is normally also faster than N-CG.}
    \label{tab:sparse10}
\end{table}
\begin{figure}[!htbp]
  \centering
  \scalebox{0.35}{
  \includegraphics{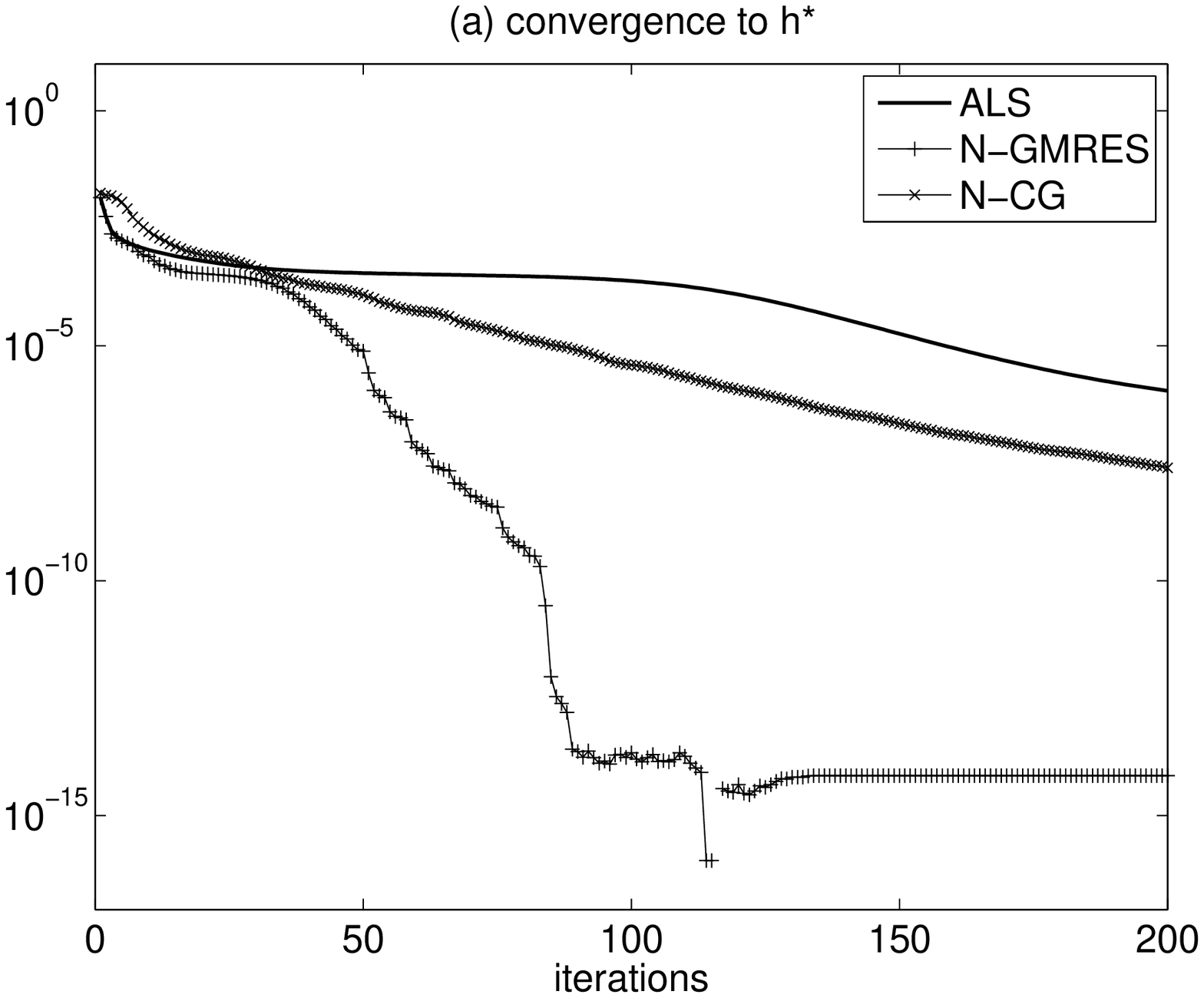}
  \includegraphics{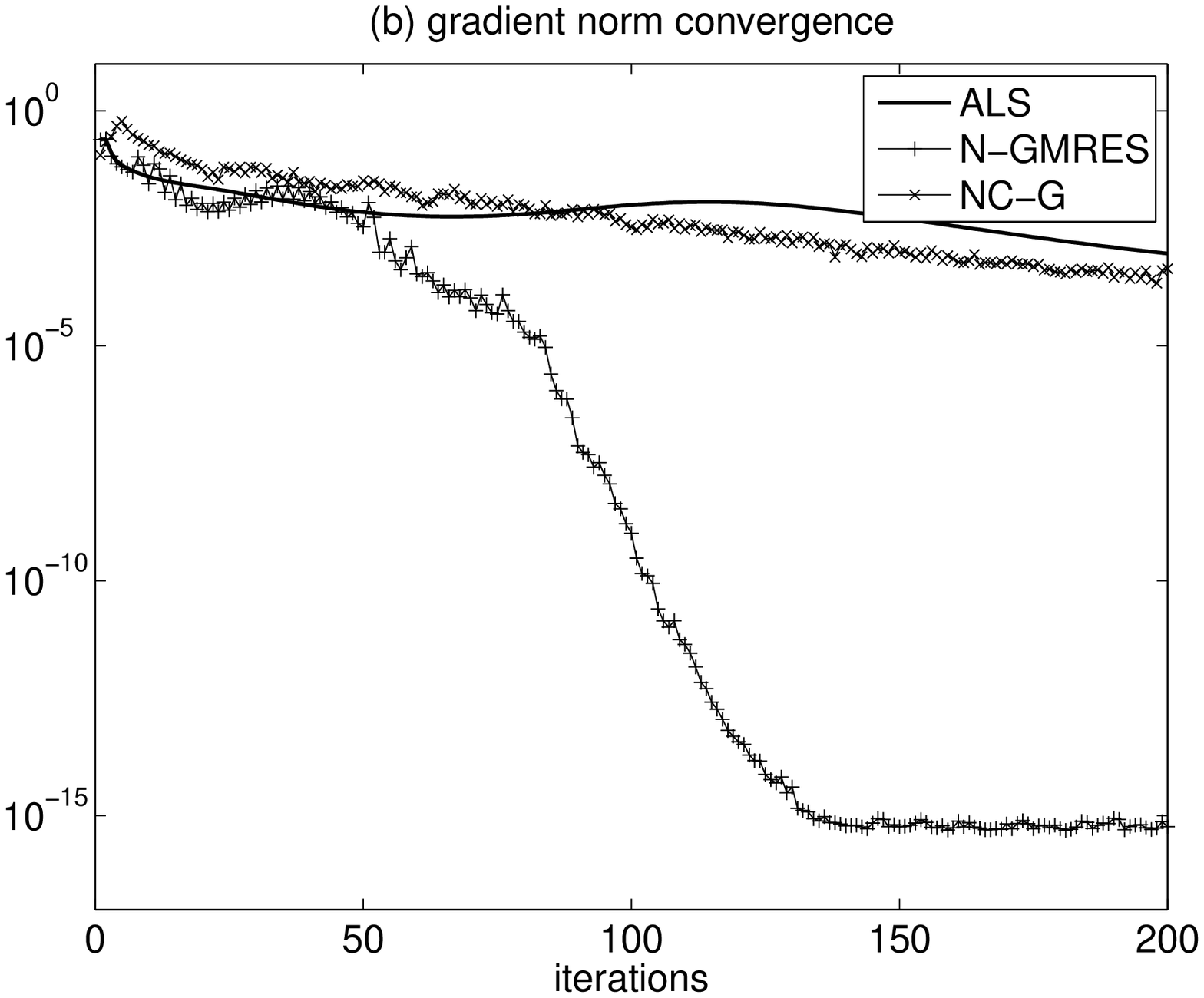}
  }
  \scalebox{0.35}{
  \includegraphics{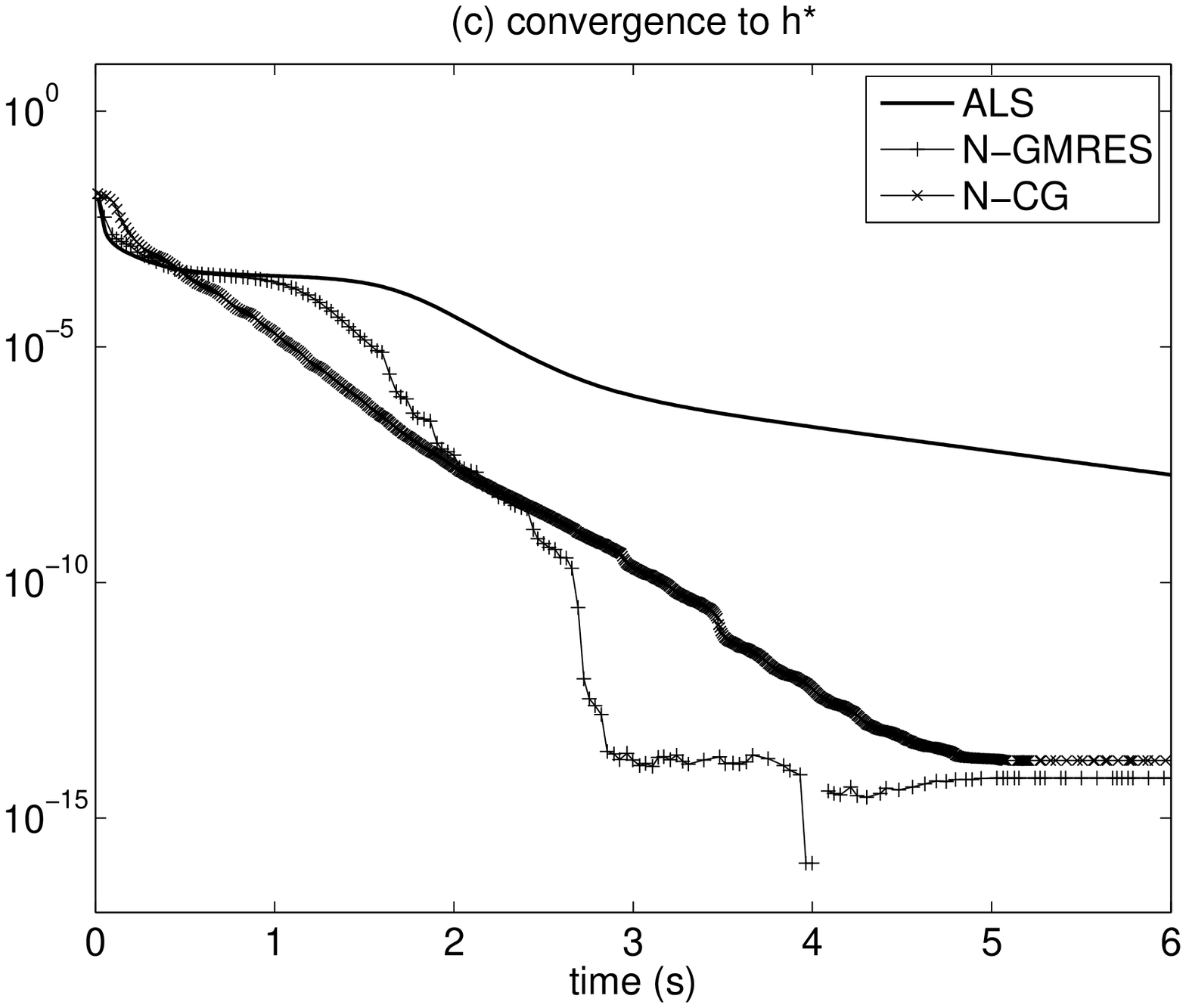}
  \includegraphics{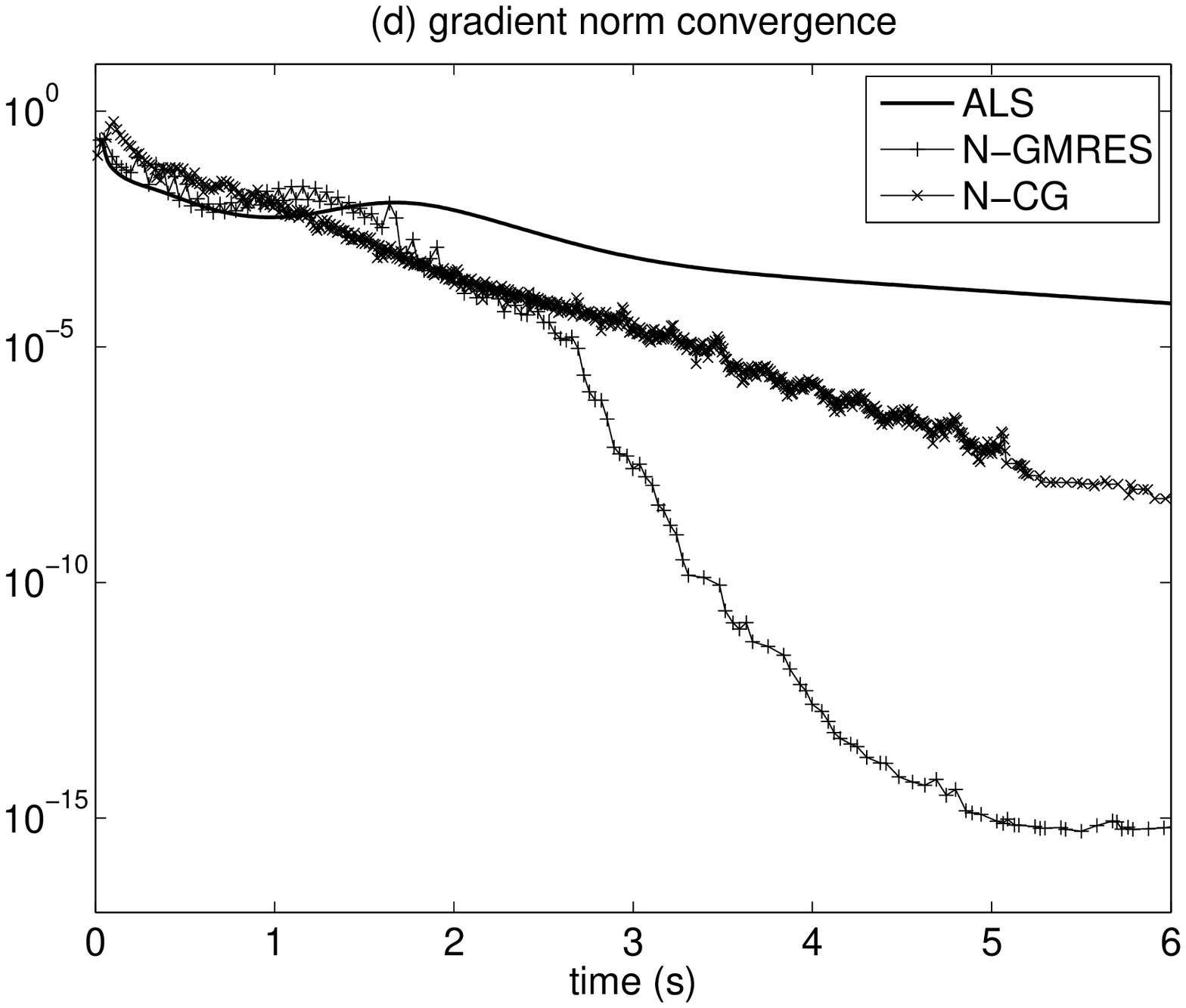}
  }
   \caption{Test Problem II. Convergence plots for case 4 from Tables \ref{tab:sparse3}-\ref{tab:sparse10}.}
   \label{fig:sparseCase4}
\end{figure}    

\begin{figure}[!htbp]
  \centering
  \scalebox{0.35}{
  \includegraphics{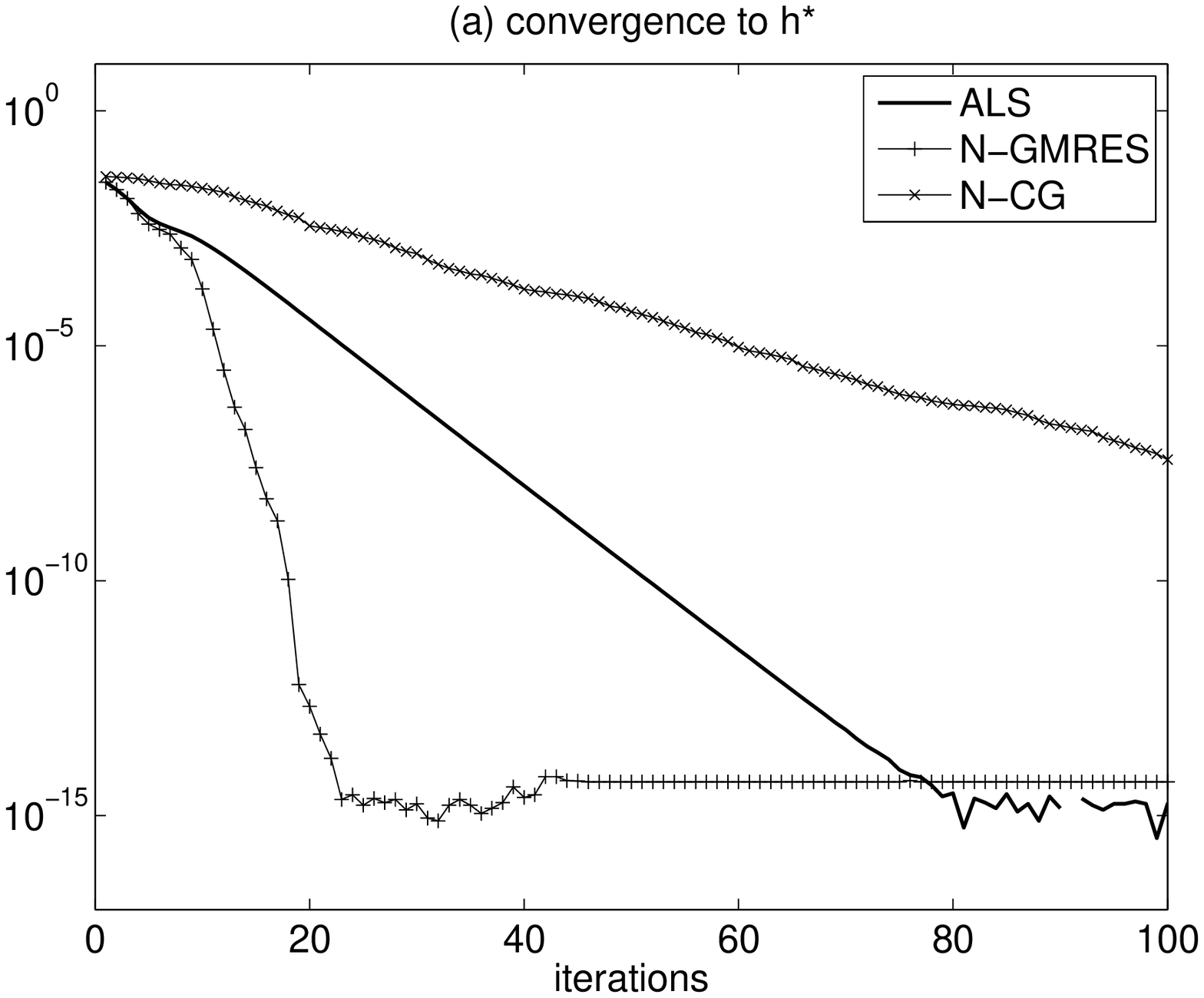}
  \includegraphics{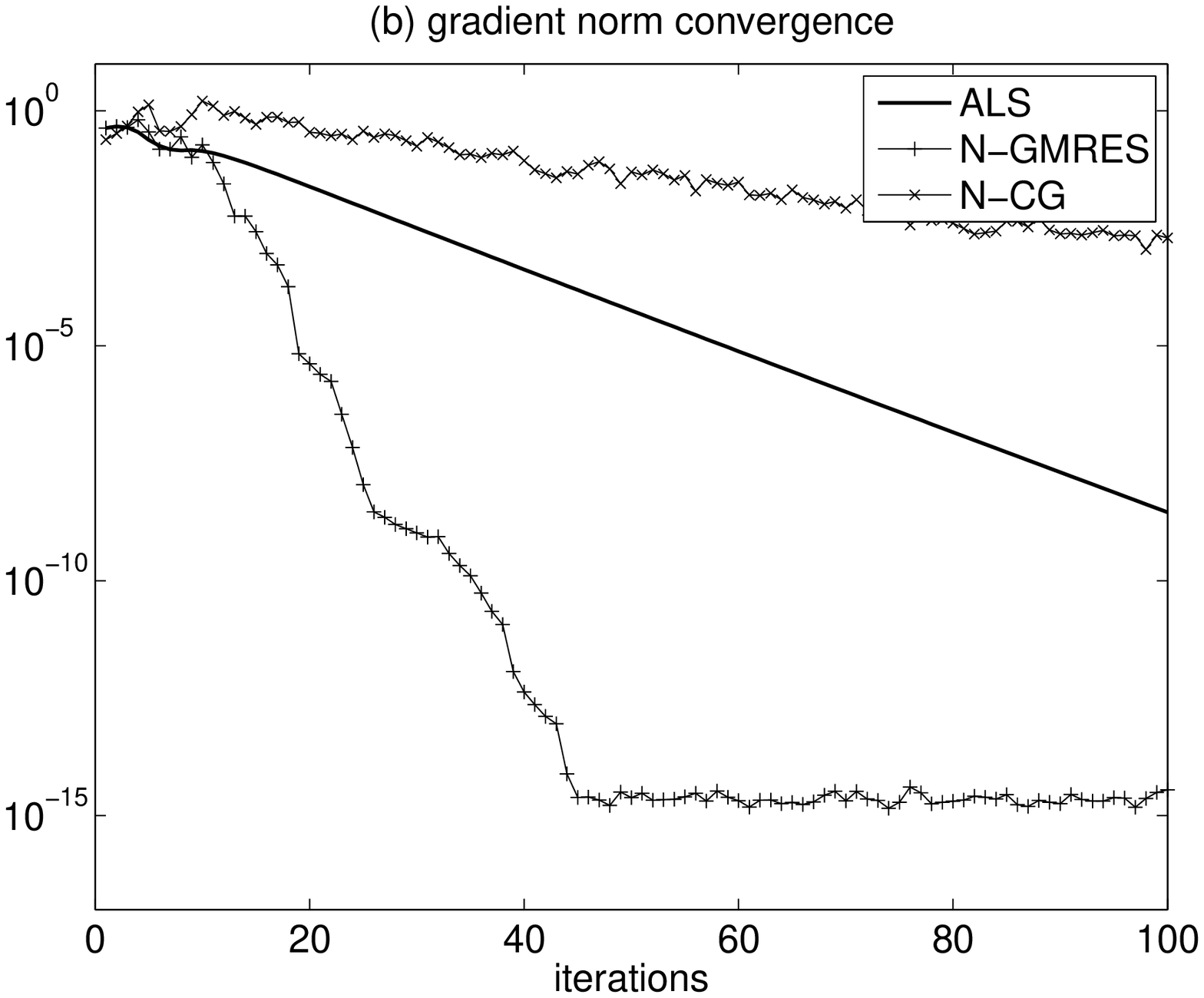}
  }
  \scalebox{0.35}{
  \includegraphics{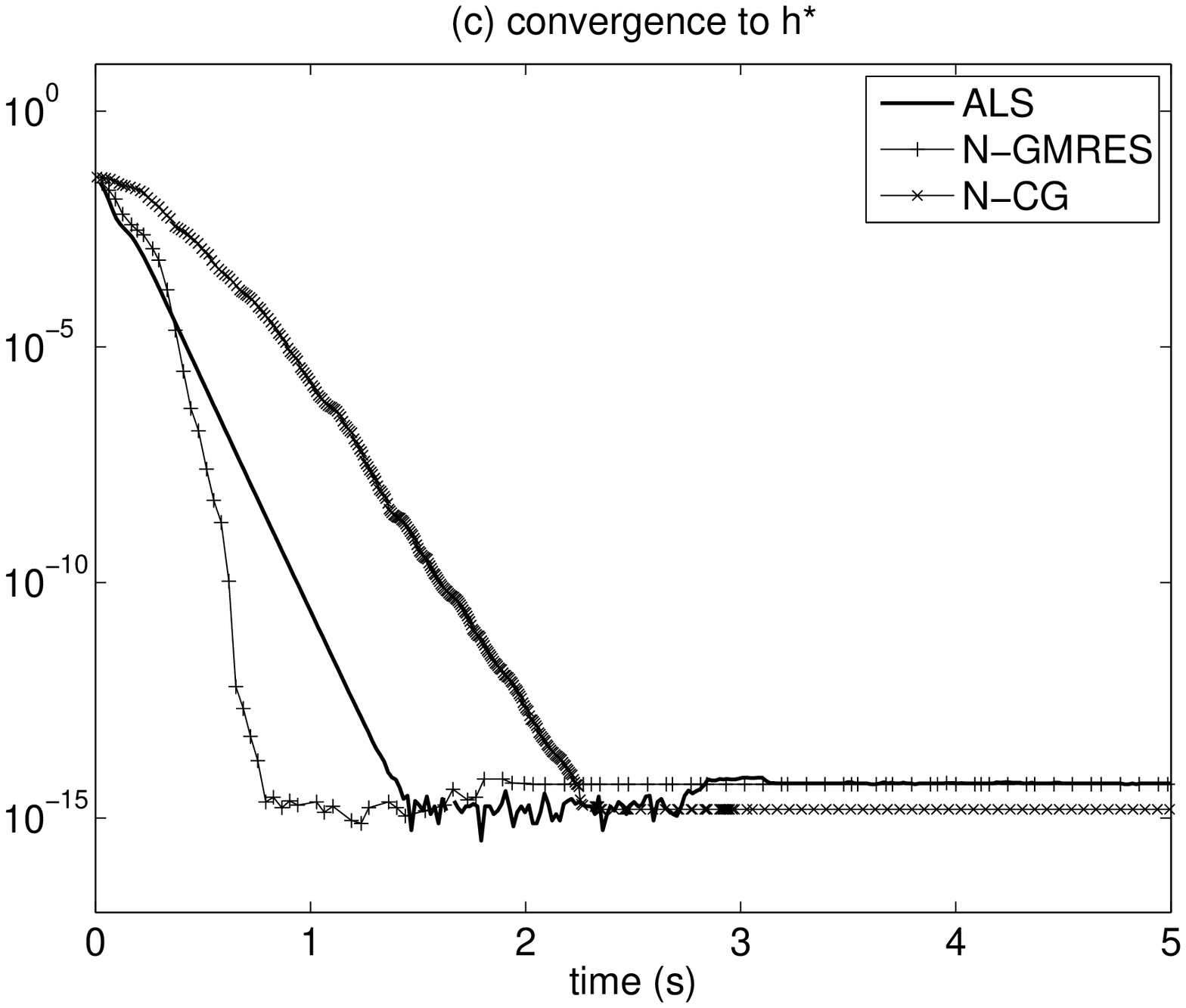}
  \includegraphics{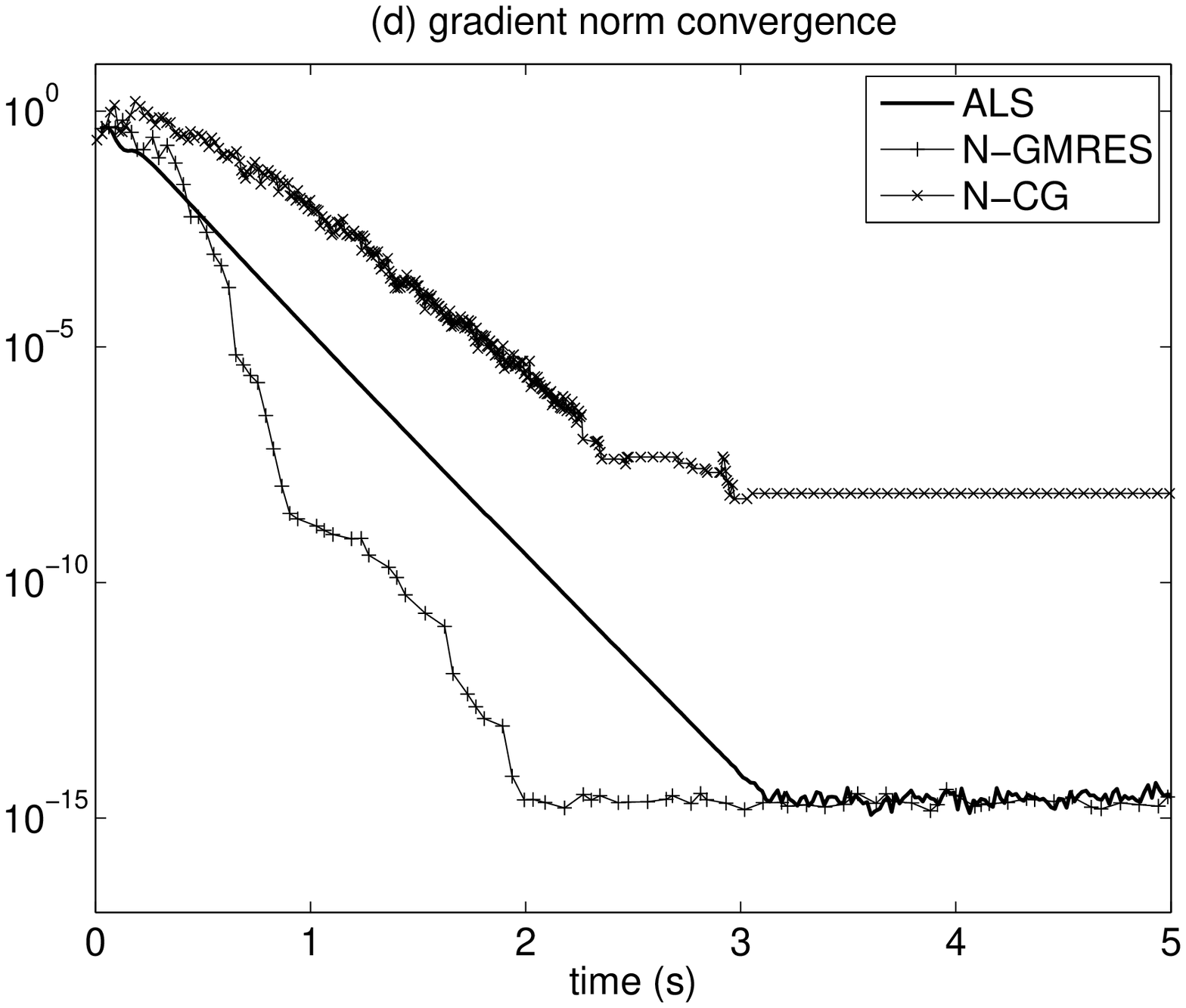}
  }
   \caption{Test Problem II. Convergence plots for case 4 from Tables \ref{tab:sparse3}-\ref{tab:sparse10}.}
   \label{fig:sparseCase5}
\end{figure}    

In this section, we present numerical results for a simple sparse test problem:\\

\noindent
{\sc Test Problem II:} Standard finite difference Laplacian tensor on a regular grid of size $s^d$ in $d$ dimensions. This test problem results in an $N$-way sparse data tensor $\mT$ with $N=2\,d$ and nonzero elements of value $2\,d$ and $-1$. For example, for $d=2$, $\mT \in \mathbb{R}^{s \times s \times s \times s}$, and the nonzero tensor elements are $t(i,j,i,j)=2 \,d=4$, for $i=1,\ldots,s$ and $j=1,\ldots,s$, and
$t(i,j,i+1,j)=-1$, $t(i,j,i,j+1)=-1$, $t(i,j,i-1,j)=-1$, and $t(i,j,i,j-1)=-1$ (with the usual exceptions at boundary points of the $d$-dimensional regular grid).

This is an academic test problem, but it provides sparse tensors that are suitable for testing our numerical method.

As in the previous section, we first consider the effect of varying the N-GMRES window size.
Fig.~\ref{fig:sparseDetIterations} shows convergence plots for an instance of Test Problem II with parameters $d=3$ and $s=6$.
Tensor $\mT$ is a 6-way tensor of size $6 \times 6 \times 6 \times 6 \times 6 \times 6$, and we seek a CP decomposition with $R=3$. Fig.~\ref{fig:sparseDetIterations}, with N-GMRES window size $w=20$, shows that ALS is rather slow for this problem. N-GMRES significantly reduces the number of iterations, faster than N-CG.

Fig.~\ref{fig:sparseDetTiming} shows convergence plots as a function of time for this test problem, for varying window size $w$. For this test problem, as in the previous section, window size $w=20$ also appears a suitable choice, and we use it for the remaining numerical tests in this section.

Tables \ref{tab:sparse3}-\ref{tab:sparse10} show convergence results for a series of sparse Test Problem II runs with a wide range of parameter values $N, s$ and $R$. We have found that the convergence behavior has more variation for Test Problem II than for Test Problem I. In particular, it happens more often that ALS, N-GMRES and N-CG converge to stationary points with different values of $h$, and the convergence profiles appear more sensitive to the initial guess. For this reason, we present two runs with different initial conditions for each parameter choice in Tables \ref{tab:sparse3}-\ref{tab:sparse10}, which give a more representative idea of how the methods perform on average. In the tables, we only present cases for which the three methods converge to a stationary point with the same value of $h$ (which still happens commonly).

Tables \ref{tab:sparse3}-\ref{tab:sparse10} largely confirm the observations we made for Test Problem I in the previous section.
Table \ref{tab:sparse3} shows that ALS is the fastest method for low accuracy computations.
Table \ref{tab:sparse6} shows that N-GMRES and N-CG become competitive for medium-accuracy computations, and Table \ref{tab:sparse10} shows that N-GMRES is almost always faster than ALS for high accuracy, sometimes substantially. N-GMRES is generally also faster than N-CG.

It is again instructive to consider convergence histories in some more detail for some of the test problems in Tables  \ref{tab:sparse3}-\ref{tab:sparse10}. Fig.~\ref{fig:sparseCase4} shows convergence plots for case 4 from Tables \ref{tab:sparse3}-\ref{tab:sparse10}. In this case, N-CG is fastest for medium accuracy, and N-GMRES for high accuracy. ALS converges rather slowly. Fig.~\ref{fig:sparseCase5} shows convergence plots for case 5 from Tables \ref{tab:sparse3}-\ref{tab:sparse10}.
In this case, ALS converges fast, but N-GMRES acceleration is still able to improve its convergence speeds for medium and high accuracy. We see again that the gradient convergence of N-CG stalls around $10^{-7}$, but $h$ does converge to $h^*$ up to machine accuracy in this case.


\section{Conclusion}
\label{sec:conc}
We have presented a new optimization algorithm for computing a canonical rank-$R$ tensor approximation that has minimal distance to a given tensor in the Frobenius norm.
The optimization algorithm uses a nonlinear version of GMRES iterate recombination that was proposed by Washio and Oosterlee for systems of nonlinear PDEs \cite{WashioNGMRES-ETNA}. We apply this nonlinear GMRES acceleration to the ALS method, with the goal of efficiently driving the gradient of the CP objective function to zero, and combine it with a line search for globalization. The resulting 3-step N-GMRES optimization algorithm can be interpreted as an acceleration process of a one-step stand-alone method, or, alternatively, the stand-alone method can be considered as a preconditioning process for N-GMRES. We have explained how N-GMRES can be applied to the (approximate) canonical tensor decomposition problem.

Extensive numerical tests on dense and sparse tensors with varying sizes and dimensions (up to 8) show that N-GMRES with ALS preconditioning in many cases outperforms pure ALS when high accuracy is required, while ALS remains faster for low accuracy and `easy' problems. N-GMRES is also competitive with the N-CG method studied in \cite{AcarCPOPT}, and appears to outperform it significantly in some cases. In cases where ALS-preconditioned N-GMRES is slower than pure ALS, it is rarely so by much, and in the other cases the potential speed gain is very substantial. For this reason, it may be a good strategy to put an N-GMRES acceleration wrapper around ALS if one does not know in advance whether ALS would converge slowly (which may depend on the problem and on the initial guess). Generally speaking, one may expect that adding an N-GMRES acceleration wrapper makes ALS convergence more robust.

It is a question of extensive current interest how Krylov methods can be generalized to tensor computations, see, for example, \cite{Savas}. Our work presents the application of a nonlinear Krylov-type method to a tensor optimization problem, and we obtain acceleration that is significant in many cases. Our approach uses a nonlinear generalization of Krylov techniques, and it is perhaps not surprising that this seems to be a promising approach for tensor minimization problems, which are indeed inherently nonlinear (more specifically, multilinear).

A promising avenue for further research is to explore how the proposed N-GMRES optimization algorithm can be used to accelerate existing methods other than ALS for canonical tensor decomposition. Similarly, it would also be interesting to investigate how the N-GMRES optimization algorithm can accelerate ALS-type algorithms (or other algorithms) for other tensor approximation problems, for example, the best rank-$(R_1,R_2,R_3)$ approximation of a tensor, see, e.g., \cite{Ishteva}.
Furthermore, the nonlinear GMRES optimization algorithm proposed in this paper 
is based on general concepts and may be applied to other nonlinear optimization problems.



\end{document}